\renewcommand{\seqinsert}{\ifmmode\allowbreak\else\-\fi}
\newtheorem{theorem}{Theorem}[section]
\newtheorem{lemma}[theorem]{Lemma}
\newtheorem{proposition}[theorem]{Proposition}
\newtheorem{example}[theorem]{Example}
\newtheorem{remark}[theorem]{Remark}
\numberwithin{equation}{section}
\theoremstyle {definition}
\renewcommand{\L}{\mathcal{L}}
\DeclareMathOperator{\id}{Id}
\DeclareMathOperator{\R}{\mathbb R}
\renewcommand{\d}{\mathrm d}
\renewcommand{\S}{\ensuremath{\mathbb{S}}}
\begin{document}
\title[Singular Yamabe problem]{Solutions of the singular Yamabe problem near singular boundaries}

\begin{abstract}
%In this paper, we investigate the asymptotic behaviors of solutions to the singular Yamabe problem with negative constant scalar curvature near singular boundaries and  derive optimal estimates, {where the background metrics are} not assumed to be conformally flat. Specifically, we demonstrate that for singular {domains modeled as $C^2$-perturbation of conical domains, the local positive} solutions are well approximated by the positive solutions in the tangent cones {at} singular boundary points. This extends the results of \cite{HJS, HS,SW} to a wider class of singular boundaries. 
In this paper, we investigate the asymptotic behaviors of solutions to the singular Yamabe problem with negative constant scalar curvature near singular boundaries and  derive optimal estimates, where the background metrics are not assumed to be conformally flat. Specifically, we demonstrate that for a wide class of Lipschitz domains with asymptotic conical structure, the local positive solutions are well approximated by the positive solutions in the tangent cones at singular boundary points. This extends the results of \cite{HJS, HS,SW}. %\cite[Han-Jiang-Shen, 2024]{HJS}, \cite[Han-Shen, 2020]{HS}, and \cite[Shen-Wang, 2024]{SW}.
\end{abstract}

%\cite[Adv. Math., 2024]{SW} and \cite[J. Funct. Anal., 2024]{HJS}
\author[Shen]{Weiming Shen}
\address{School of Mathematical Sciences, Capital Normal University, Beijing, 100048, China}
    \email{wmshen@aliyun.com} 
    
\author[Wang]{Zhehui Wang}
\address{School of Sciences, Great Bay University, Dongguan, 523000, China}
    \email{wangzhehui@gbu.edu.cn}
    
\author[Xie]{Jiongduo Xie}
\address{School of Mathematical Sciences\\
Shanghai Jiao Tong University\\
Shanghai, 200240, China}
\email{jiongduoxie@outlook.com%sjtu6931165@sjtu.edu.cn%xiejd\_math@126.com
}

\maketitle
\section{Introduction}\label{Intro}
{ 
The singular Yamabe problem concerns finding a complete conformal metric of constant scalar curvature on a domain obtained by removing a nonempty closed subset from a compact Riemannian manifold. %{The singular Yamabe problem has been extensively investigated in a series of works including \cite{AMC, LN, MaPa99, MPU96, MS91, SY88, Sc88}.
In this paper, we focus on the asymptotic behaviors of solutions in the case that the resulting complete conformal metric has constant negative scalar curvature. %{For the asymptotic behaviors of solutions in the positive case, one may refer to \cite{CGS1989, HLL2021, KMPS1999} (flat background metric) and \cite{HXZ2023, M2008,XZ2022} (non-flat background metric).}}

Let $(M, g)$ be a smooth Riemannian manifold  of dimension $n\geq 3$, which is either closed or non-compact and complete. Let $\Omega\subset M$ be a domain, and denote by $S_g$ the scalar curvature of $g$.} Suppose $u>0$ solves
\begin{align}
    \Delta_g u-\frac{n-2}{4(n-1)}S_gu&=\frac{n(n-2)}{4}u^{\frac{n+2}{n-2}} \mbox{ in } \Omega, \label{equation-Yamabe-1}\\
    u&=+\infty \mbox{ on } \partial\Omega,\label{boundary-condition-1}
\end{align}
then $u^{\frac{4}{n-2}}g$ is a complete metric with constant scalar curvature $-n(n-1)$ in $\Omega$. 
The existence theory for problem \eqref{equation-Yamabe-1}–\eqref{boundary-condition-1} was established in the works such as \cite{AMC, LG, LN, SW}. In the case where the background manifold is the standard Euclidean space, \eqref{equation-Yamabe-1}–\eqref{boundary-condition-1} is reduced to the Loewner-Nirenberg problem, namely,
\begin{align}
    \Delta u&=\frac{n(n-2)}{4}u^{\frac{n+2}{n-2}} \mbox{ in } \Omega, \label{equation-LN-1}\\
    u&=+\infty \mbox{ on } \partial\Omega.\label{boundary-condition-LN}
\end{align}
Let $V\subset\R^n$ be an infinite cone with the origin being its vertex, and we assume $\Sigma=V\cap\S^{n-1}$ is a Lipschitz domain on $\S^{n-1}$. {If $(M, g)=(\R^n, g_0)$ and $\Omega=V\subset \R^n$, where $g_0$ is the Euclidean metric, then there is a unique positive solution $u_V$ to \eqref{equation-LN-1}-\eqref{boundary-condition-LN}. From \cite[Theorem 2.3]{HJS}, $u_V$ has the form $$u_V(x)=|x|^{-\frac{n-2}{2}}\xi(\theta),$$
where $\xi$ satisfies
\begin{align}
    \Delta_\theta \xi-\frac{(n-2)^2}{4}\xi&=\frac{n(n-2)}{4}\xi^{\frac{n+2}{n-2}} \mbox{ in } \Sigma, \label{equation-Yamabe-2}\\
    \xi&=+\infty \mbox{ on } \partial\Sigma,\label{boundary-condition-2}
\end{align}
$\theta=\frac{x}{|x|}\in\Sigma$ and $\Delta_\theta$ is the Laplace-Beltrami operator on $\S^{n-1}$.}

When $\Omega$ is a sufficiently smooth domain, the asymptotic behaviors of positive solutions to \eqref{equation-Yamabe-1}–\eqref{boundary-condition-1} near the boundary has been thoroughly studied. For bounded $C^2$ domains $\Omega$, let $d$ denote the distance function to $\partial\Omega$ and let $u$ be a positive solution of \eqref{equation-LN-1}-\eqref{boundary-condition-LN}, Loewner and Nirenberg \cite{LN} showed that near the boundary,
\begin{equation}\label{LN-0order}
|d^{\frac{n-2}{2}}u-1|\leq Cd,
\end{equation}
where $C$ is a positive constant depending only on $n$ and $\Omega$. When {$\partial\Omega$ is smooth}, Mazzeo \cite{M} and Andersson, Chru\'sciel, and Friedrich \cite{ACF} established asymptotic expansions of solutions of \eqref{equation-Yamabe-1}–\eqref{boundary-condition-1} up to an arbitrarily finite order. The singular Yamabe problem has found numerous geometric applications in recent years, {see \cite{CMY2022, CLW2019, Graham2017, GG2021,GH2017} for example.} In these works, the boundary behaviors of solutions to \eqref{equation-Yamabe-1}–\eqref{boundary-condition-1} play a crucial role.

Recently, the asymptotic behaviors of solutions {\eqref{equation-Yamabe-1}-\eqref{boundary-condition-1}} near singular boundaries have attracted increasing interest. When the background metric is flat, Han and Shen \cite{HS} proved that near a singular boundary point, the solution can be approximated by the corresponding solution in the tangent cone at that point. Besides, Jiang \cite{Jiang} studied boundary expansion of solutions in finite cones over smooth spherical domains. Furthermore, Han, Jiang, and Shen \cite{HJS} derived arbitrarily finite order asymptotic expansions of solutions in finite cones in terms of the corresponding solutions in infinite cones. For non-flat background metrics, Shen and Wang \cite{SW} extended the results of \cite{HS}, where {they applied a new method to construct barrier functions.} In both \cite{HS} and \cite{SW}, it is assumed that the boundary near a singular point consists of $k$ $(k\leq n)$ $C^2$-hypersurfaces intersecting at that point, with linearly independent normal vectors at that point. However, the asymptotic behaviors of solutions near more general types of singular points remain unknown. {Regarding the asymptotic behaviors of solutions of the singular Yamabe problem with positive constant scalar curvature, relevant results can be found in \cite{CGS1989, HLL2021, KMPS1999} (flat background metric) and in \cite{HXZ2023, M2008,XZ2022} (non-flat background metric).}

\subsection{Assumption}\label{assum}
In this paper, we will investigate the asymptotic behaviors of local positive solutions of \eqref{equation-Yamabe-1}-\eqref{boundary-condition-1} near singular points on {the boundary}. The background metric does not need to be conformally flat, and we assume for convenience that $g$ is a Riemannian metric in $B_2(0)\subset\R^n$ with $n\geq 3$, and $\Omega\subset B_2(0)$ is a domain with $0\in \partial\Omega$. Suppose $(x_1, \cdots, x_n)$ is a normal coordinate system of $g$ with $0$ as the origin and $\partial\Omega\cap\overline{B_1(0)}$ is Lipschitz.

Assume $V$ is a tangent cone of $\Omega$ at $0$, and there is a constant $0<R<1$, and a $C^2$-diffeomorphism $$T: B_R(0)\to T(B_R(0))\subset\R^n,$$
such that 
\begin{align}
T(\Omega\cap B_R(0))=V\cap T(B_R(0)),&\mbox{ and } T(\partial\Omega\cap B_R(0))=\partial V\cap T(B_R(0)),\label{T-map}\\
T(0)=0,&\mbox{ and } \nabla T(0)=\id.\label{T(0)-nablaT(0)}
\end{align}
The following presents a type of singular boundary that differs from that in references \cite{HS,SW}.

\begin{example}
Let $V_0\subsetneq\{y\in\R^n:y_n\geq0\}$ be an infinite cone with the origin being its vertex, and we assume that it has rotational symmetry. Suppose $S_0: V_0\to \R^n$ is a diffeomorphism near $0$ defined by $$S_0y=y+c|y|^2e_n,$$ where $c>0$ is a small constant and $e_n=(0,\cdots,0,1)$. Set {$T_0=S_0^{-1}$, $x=S_0y$, and $y=T_0x$} for $y\in V_0$ near $0$. By a direct calculation,
$$T_0x=(x_1,\cdots,x_{n-1},\frac{-1+\sqrt{1+4c(x_n-c|x'|^2)}}{2c}), \quad \mbox{ for } x \mbox{ near } 0,$$
where $x'=(x_1,\cdots,x_{n-1},0)$. Then, $T_0(0)=0$, $\nabla T_0(0)=\id$, and $\|T\|_{C^2}$ is bounded near $0$. Let $\Omega_0=S_0(V_0)$, then $\Omega_0$ is a domain with conical singularity at $0$, and $V_0$ is the tangent cone of $\Omega_0$ at $0$. Moreover, near $0$, $\partial\Omega_0$ does not consist of $k$ $C^2$-hypersurfaces intersecting at $0$ for any $k\leq n$.
\end{example}
\subsection{Main results}

Suppose $u>0$ in $\Omega\cap B_1(0)$ solves
\begin{align}
    \Delta_g u-\frac{n-2}{4(n-1)}S_gu&=\frac{n(n-2)}{4}u^{\frac{n+2}{n-2}} \mbox{ in } \Omega\cap B_1(0), \label{equation-Yamabe-main}\\
    u&=+\infty \mbox{ on } \partial\Omega\cap B_1(0),\label{boundary-condition-main}
\end{align}
where $S_g$ is the scalar curvature of $g$. We call such $u$ a local positive solution, and in fact, we can compare it with the unique positive solution in the tangent cone $V$ via the diffeomorphism $T$ near the {singular} point $0$, namely, compare $u(x)$ and $u_V(Tx)$ when $d_g(x, 0)\to 0$. Our first main result is as follows.

\begin{theorem}\label{thm-main-1}
Let $n\geq 3$, and let $\Omega$, $V$, and $T$ be as in assumption \ref{assum}. Suppose $u\in C^\infty(\Omega\cap B_1(0))$ is a local positive solution to \eqref{equation-Yamabe-main}-\eqref{boundary-condition-main}. Then, there is a constant $r_0>0$ small enough, such that
    \begin{equation}\label{main-1}\left|\frac{u(x)}{u_V(Tx)}-1\right|\leq Cd_g(x, 0), \mbox{ for any } x\in\Omega\cap B_{r_0}(0),\end{equation}
where $d_g(x, 0)$ is the distance from $x$ to $0$ with respect to the metric $g$, and $C$ is a positive constant depending only on $n$, $g$, $\|T\|_{C^2(B_{R}(0))}$, and $V$.%$\partial\Omega$ near $0$. 

%$\mathrm{(2)}$ If $n=3$, we assume additionally that, for $f\in C^\infty(V)$,
%\begin{align}
%    |(\partial_{x_i}(f\circ T))(x)-(\partial_{y_i}f)(Tx)| &\leq C_0|\nabla f(Tx)||x|, \label{condition-3}\\
%    |(\partial^2_{x_ix_j}(f\circ T))(x)-(\partial^2_{y_iy_j}f)(Tx)|&\leq C_0(|\nabla f(Tx)|+|x||(\nabla^2f)(Tx)|).\label{condition-4}
%\end{align}
%    There a constant $r_0>0$ small enough, such that \eqref{main-1} holds with a positive constant $C$ depending only on $n$, $g$, $C_0$, $\|T\|_{C^2}$, and the geometry of $\partial\Omega$ near $0$.
\end{theorem}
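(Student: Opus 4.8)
My plan would be to establish the estimate via a barrier (super/subsolution) argument, comparing $u$ with suitably perturbed multiples of $u_V \circ T$. Write $\tilde{u}(x) = u_V(Tx)$. The first step is to compute, using the $C^2$-diffeomorphism $T$ with $T(0)=0$ and $\nabla T(0) = \id$, the PDE satisfied by $\tilde{u}$ in $\Omega \cap B_R(0)$: pulling back the flat Loewner–Nirenberg equation \eqref{equation-LN-1} through $T$ produces an operator of the form $a^{ij}(x)\partial_{ij}\tilde u + b^i(x)\partial_i \tilde u = \frac{n(n-2)}{4}\tilde u^{\frac{n+2}{n-2}}$, where $a^{ij} = \delta^{ij} + O(|x|)$ and the lower-order terms are controlled by $\|T\|_{C^2}$. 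Comparing this with the Yamabe operator $\Delta_g - \frac{n-2}{4(n-1)}S_g$ in normal coordinates (where $g_{ij} = \delta_{ij} + O(|x|^2)$), the difference between the two operators applied to $\tilde u$ is an error term; the key point is to bound this error in terms of $\tilde u$ and its derivatives, using the known homogeneity $u_V(y) = |y|^{-(n-2)/2}\xi(y/|y|)$ and boundary behavior $\xi \sim \mathrm{dist}(\cdot,\partial\Sigma)^{-(n-2)/2}$, which give scale-invariant gradient bounds $|\nabla \tilde u| \lesssim \tilde u / d_g(x,\partial\Omega)$ near the singular boundary.

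The second step is to construct the barriers. I would look for super- and subsolutions of the form $u_\pm(x) = (1 \pm C d_g(x,0)) \tilde u(x) \pm (\text{correction})$, or more robustly, following the method of \cite{SW}, multiply $\tilde u$ by a function of the "cone distance" that both dominates the error terms accumulated above and respects the $+\infty$ boundary condition. Plugging $u_+$ into the Yamabe operator, one wants $\Delta_g u_+ - \frac{n-2}{4(n-1)}S_g u_+ \le \frac{n(n-2)}{4}u_+^{\frac{n+2}{n-2}}$; the gain from the superlinear right-hand side when multiplying by $(1 + Cd_g)$ is of order $Cd_g \cdot \tilde u^{\frac{n+2}{n-2}}$, which must absorb the error terms, which are of order $|x|\cdot|\partial^2\tilde u| + \ldots \lesssim |x| \cdot \tilde u / d_g(x,\partial\Omega)^2 \lesssim \tilde u^{\frac{n+2}{n-2}} \cdot d_g(x,0) \cdot (\text{bounded factor})$ once one uses $\tilde u^{\frac{4}{n-2}} \sim d_g(x,\partial\Omega)^{-2}$. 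So the orders match, and choosing $C$ large (depending on $n,g,\|T\|_{C^2},V$) and $r_0$ small closes the inequality. The subsolution is symmetric.

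The third step is to apply the comparison principle on $\Omega \cap B_{r_0}(0)$. Here one must handle two types of boundary: on $\partial\Omega \cap B_{r_0}(0)$ both $u$ and $u_\pm$ blow up, so a boundary-blow-up comparison (using the precise rate $d^{-(n-2)/2}$ from \eqref{LN-0order}-type estimates for both, valid since $\partial\Omega$ is Lipschitz with conical structure) is needed; on the interior sphere $\partial B_{r_0}(0) \cap \Omega$, one uses a crude a priori bound on $u$ (which follows from the general existence/uniqueness theory and interior estimates) together with the fact that $\tilde u$ is comparable to $u$ there up to a multiplicative constant, then rescales the constant $C$ to dominate. The comparison principle for the operator with the superlinear term then yields $u_- \le u \le u_+$, which is exactly \eqref{main-1}.

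The main obstacle I anticipate is the boundary comparison along the Lipschitz (merely conical, not piecewise-$C^2$) part of $\partial\Omega$: one cannot use the distance function $d$ as a smooth barrier ingredient, and must instead work intrinsically with the cone solution $u_V$ itself, controlling how $T$ distorts $\mathrm{dist}(\cdot,\partial\Omega)$ versus $\mathrm{dist}(T\cdot,\partial V)$ — the bound $\nabla T(0) = \id$ plus $C^2$ control gives $\mathrm{dist}(Tx,\partial V) = (1+O(|x|))\,\mathrm{dist}_g(x,\partial\Omega)$, but making this quantitative and uniform up to the singular vertex, and checking it interacts correctly with the blow-up rate of $\xi$ near $\partial\Sigma$, is the delicate technical heart of the argument. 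A secondary difficulty is establishing the scale-invariant gradient/Hessian estimates for $\tilde u$ uniformly up to $\partial\Omega$ near $0$, which requires the Lipschitz geometry of $\Sigma$ and standard boundary-blow-up regularity for \eqref{equation-Yamabe-2}-\eqref{boundary-condition-2}.
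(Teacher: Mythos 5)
Your proposal has the same architecture as the paper's proof (transform to the cone via $T$, build an explicit super/subsolution anchored on $u_V$, run a comparison principle), and you correctly identify the key scale-invariant estimates $|\nabla u_V|\lesssim u_V/d$, $|\nabla^2u_V|\lesssim u_V/d^2$, and $u_V^{4/(n-2)}\sim d^{-2}$. The one cosmetic difference is that the paper pushes forward to $y=Tx$ and compares $v=u\circ T^{-1}$ against $u_V$ in the cone $V$, rather than pulling $u_V$ back; this matters only because the homogeneity of $u_V$ (used repeatedly) is exact in $y$-coordinates.

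However, there is a real gap in Step~2. The purely multiplicative barrier $(1\pm Cd_g)\tilde u$ does not close the differential inequality: when the Laplacian hits the product, the cross term $2C\,\nabla d_g\cdot\nabla\tilde u$ carries the same factor $C$ as the superlinear gain $Cd_g\,\tilde u^{(n+2)/(n-2)}$, so enlarging $C$ does not help, and after the crude bound $|\nabla\tilde u|\lesssim\tilde u/d$ this cross term is of size $C\tilde u/d$ while the gain is only of size $Cd_g\tilde u/d^2$ with a \emph{cone-dependent} constant (the lower bound $d^{(n-2)/2}u_V\geq C_1^{-1}$ from \eqref{0-order}); these are of the same order at points where $d\sim d_g$, and the sign cannot be controlled without more information. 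The paper resolves this by adding the extra term $Au_V^\beta$ with $\beta=1-\tfrac{2}{n-2}$ for $n\geq4$ (and $\beta=0$ for $n=3$); its contribution $D_1\lesssim -A\,u_V^{\beta+4/(n-2)}\sim -A\,u_V/d$ is exactly what absorbs the cross term $D_3$, and at the same time $Au_V^\beta/u_V=Au_V^{\beta-1}\lesssim A\,r$ when $n\geq4$, so the final estimate is not degraded. You gesture at ``a correction,'' but without identifying this term the barrier does not work as a supersolution.

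Two further points you omit. First, for $n=3$ the choice $\beta=0$ only yields $Au_V^\beta/u_V\lesssim r^{1/2}$ (negative $\beta$ fails because the term $A\beta(\beta-1)u_V^{\beta-2}|\nabla u_V|^2$ would then be positive and uncontrollable), so the barrier argument gives the weaker intermediate bound $|v/u_V-1|\lesssim r^{1/2}$, which the paper then bootstraps to $O(r)$ using Lemma~\ref{estimate-derivatives-T} and the argument of \cite[Theorem~6.2]{SW}; your proposal does not account for this dimensional wrinkle. Second, the boundary comparison is handled more cleanly than you suggest: instead of juggling a boundary-blow-up rate and a separate crude bound on the inner sphere, the paper adds $2u_s$ (the explicit radial blow-up solution on $B_s$) to the barrier so that $w+2u_s=+\infty$ on \emph{all} of $\partial(V\cap T(B_R)\cap B_s)$, and then compares against a downward translate $v^\epsilon$ of $v$, applying the maximum principle only on the set where $v^\epsilon$ is large enough to make the linearized zeroth-order coefficient have the right sign before sending $\epsilon\to0$.
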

%In particular, the results of \cite{HS,SW} can be deduced from Theorem \ref{thm-main-1}. 
{Theorem \ref{thm-main-1} could be viewed as a generalization of the main results in \cite{HS,SW}.} Specifically, let $\partial\Omega\cap B_1(0)$ consist of $k$ $(k\leq n)$ $C^2$-hypersurfaces $S_1,\cdots,S_k$ intersecting at $0$ with the property that the
normal vectors of $S_1,\cdots,S_k$ at $0$ are linearly independent. Then, the corresponding diffeomorphism $T$ can be explicitly constructed and satisfies \eqref{T-map}-\eqref{T(0)-nablaT(0)}, {see \cite[Section 2]{SW}. Actually, estimate \eqref{main-1} is similar to \eqref{LN-0order} in some extent. When the background manifold is the standard Euclidean space and $\partial\Omega$ is $C^2$, the tangent cone at any boundary point is a half space, and for convenience we may assume the boundary point is $0$ and the tangent cone is $\mathbb{R}^n_+$ in this case. %Note the solution to \eqref{equation-LN-1}-\eqref{boundary-condition-LN} in $\mathbb{R}^n_+$ is $x_n^{-\frac{n-2}{2}}$, and $x_n$ and $|x|$ behave like $d(x, \partial\Omega)$ when $|x|$ small enough. 
For $x\in\Omega$ with $|x|$ small enough and $d(x, \partial\Omega)=d(x,0)$, estimate \eqref{main-1} at $x$ is similar to estimate \eqref{LN-0order} at $x$. From this perspective, the type of estimate in Theorem \ref{thm-main-1} is reasonable.}

%Theorem \ref{thm-main-1} also could be viewed as a generalization of \eqref{LN-0order}. When the background manifold is the standard Euclidean space and $\partial\Omega$ is $C^2$, the tangent cone at any boundary point is a half space, for convenience we may assume the boundary point is $0$ and the tangent cone is $\mathbb{R}^n_+$. Note the solution to \eqref{equation-LN-1}-\eqref{boundary-condition-LN} in $\mathbb{R}^n_+$ is $x_n^{-\frac{n-2}{2}}$ and $x_n$ behaves like $d(x, \partial\Omega)$ when $|x|$ small enough.

{When $T$ has higher regularity, we can} investigate more refined asymptotic expansions of the solutions.
\begin{theorem}\label{thm-main-2}
Under the same assumptions as in Theorem \ref{thm-main-1}, we further assume that $T\in C^3(B_R(0))$.
Then, there is a constant $r_1>0$ small enough and a function $c_1\in C^{\infty}(\Sigma)\cap L^\infty(\Sigma)$, such that 
\begin{equation}\label{main-2}
\left|\frac{u(x)}{u_V(Tx)}-1-c_1\left(\frac{Tx}{|Tx|}\right)|Tx|\right|\leq
\begin{cases}
\begin{aligned}
 &Cd^2_g(x, 0),\text{ if }\mu_1>2,\\  
 &Cd^2_g(x, 0)|\log{d_g(x, 0)}|,\text{ if }\mu_1=2,\\
 &Cd^{\mu_1}_g(x, 0),\text{ if }\mu_1<2,
\end{aligned}
\end{cases} 
\end{equation}
for any $x\in\Omega\cap B_{r_1}(0)$, where $c_1$ depends only on $n$, $g$, $V$, and the derivatives of $T$ at $0$, 
\begin{equation}\label{mu1-def}\mu_1=\sqrt{\Big(\frac{n-2}{2}\Big)^2+\lambda_1},\end{equation}
with $\lambda_1$ represents the first eigenvalue of the elliptic operator $L_1$ defined by \eqref{L1} depending only on $n$ and $\Sigma$, and $C$ is a positive constant depending only on $n$, $g$, $\|T\|_{C^3(B_R(0))}$, and $V$. {In addition}, when $\mu_1=2$ and $T\in C^{3,\gamma}(B_R(0))$ for some $\gamma\in(0, 1)$, we can improve the estimate as follows:
\begin{equation}\label{main-2-3}\left|\frac{u(x)}{u_V(Tx)}-1-c_1\left(\frac{Tx}{|Tx|}\right)|Tx|-c_2\left(\frac{Tx}{|Tx|}\right)|Tx|^2\log|Tx|\right|\leq Cd^2_g(x, 0),\end{equation} where $c_2\in C^{\infty}(\Sigma)\cap L^\infty(\Sigma)$ depends only on $n$, $g$, $V$, and the derivatives of $T$ at $0$.
\end{theorem}

\begin{remark}\label{mu1g1}
By Shen and Wang \cite[Proposition 4.5]{SW}, when $n=3$ we know $\lambda_1>\frac{3}{4}$. Therefore, we have $\mu_1>1$. 

It should be mentioned here that, by formal calculations, we can not determine the coefficient of the term with order $r^{\mu_1}$. See Remark \ref{premu1} for a detailed discussion. 
\end{remark}

%\begin{remark}
%In fact, to obtain the term $c_1(Tx/|Tx|)|Tx|$ in \eqref{main-2}, it suffices to assume that $T\in C^{2,\alpha}(B_R(0))$, for some positive constant $\alpha\in (0,1)$. Under this assumption, by an argument analogous to the proof of Theorem \ref{thm-main-2}, we can obtain
%\begin{equation*}\label{main-2-alpha}
%\left|\frac{u(x)}{u_V(Tx)}-1-c_1\left(\frac{Tx}%{|Tx|}\right)|Tx|\right|\leq
%\begin{cases}
%\begin{aligned}
 %&Cd^{1+\alpha}_g(x, 0),\text{ if %}\mu_1>1+\alpha,\\  
 %&Cd^{1+\alpha}_g(x, 0)|\log{d_g(x, %0)}|,\text{ if }\mu_1=1+\alpha,\\
 %&Cd^{\mu_1}_g(x, 0),\text{ if } %\mu_1<1+\alpha.
%\end{aligned}
%\end{cases} 
%\end{equation*}
%{\color{blue}In particular, when %$\mu_1=1+\alpha$ we can improve the %estimate as follows:
%$$\left|\frac{u(x)}{u_V(Tx)}-1-c_1\left(\frac{Tx}{|Tx|}\right)|Tx|-c_2\left(\frac{Tx}%{|Tx|}\right)|Tx|^{1+\alpha}\log|Tx|\right|%\leq Cd^{1+\alpha}_g(x, 0).$$}
%\end{remark}

\begin{remark}\label{tmalpha}
In fact, to obtain the term $c_1(Tx/|Tx|)|Tx|$ in \eqref{main-2}, it suffices to assume that $T\in C^{2,\alpha}(B_R(0))$. In general, let $m$ be an integer with {$1\leq m\leq\mu_1$}. Under the same assumptions as in Theorem \ref{thm-main-1}, we further assume that $T\in C^{m+1,\alpha}(B_R(0))$, for some positive constant {$\alpha\in [0,1]$ with $m+\alpha>1$. Here, $C^{m, 0}$ denotes $C^m$.} Then, by a strategy similar to the proof of Theorem \ref{thm-main-2}, we will find that there exist functions {$c_1,c_2,\cdots,c_{m}\in C^{\infty}(\Sigma)\cap L^\infty(\Sigma)$} depending only on $n$, $g$, $V$, $m$, $\alpha$, and the derivatives of $T$ at $0$, such that 
\begin{equation*}\label{main-2-m-alpha}
\left|\frac{u(x)}{u_V(Tx)}-1-\sum^{m-[1-\alpha]}_{i=1} c_i\left(\frac{Tx}{|Tx|}\right)|Tx|^i\right|\leq
\begin{cases}
\begin{aligned}
 &Cd^{m+\alpha}_g(x, 0),\text{ if }\mu_1>m+\alpha,\\  
 &Cd^{m+\alpha}_g(x, 0)|\log{d_g(x, 0)}|,\text{ if }\mu_1=m+\alpha,\\
 &Cd^{\mu_1}_g(x, 0),\text{ if } m<\mu_1<m+\alpha,
\end{aligned}
\end{cases} 
\end{equation*}
{and if $\mu_1=m$ and $\alpha>0$, we have
$$\left|\frac{u(x)}{u_V(Tx)}-1-\sum^{m-1}_{i=1} c_i\left(\frac{Tx}{|Tx|}\right)|Tx|^i-c_{m}\left(\frac{Tx}{|Tx|}\right)|Tx|^{m}\log|Tx|\right|\leq Cd^{m}_g(x, 0).$$
}
\end{remark}

Note that when {$T=\id$ and the background manifold is the standard Euclidean space with $\Omega=V\cap B_2$}, Han, Jiang, and Shen \cite{HJS} {proved 
\begin{equation}\label{hjs-est}
    \left|\frac{u(x)}{u_V(x)}-1\right|\leq Cd_\Sigma^\tau|x|^{\mu_1},
\end{equation}
    for some constant $\tau>0$, where $d_{\Sigma}$ is the distance function to $\partial\Sigma$ on the sphere. They also obtained arbitrarily high-order asymptotic expansions for $u(x)/u_V(x)$. There are two key differences between our estimates and \eqref{hjs-est}:
    
    {{The first difference is that the expansion of $u/u_V$ in \eqref{hjs-est} does not have terms with order greater than 0 and less than $\mu_1$. %Because the background metric is flat and $T=\mathrm{Id}$ in their setting, and their 
    We point out that the discussion in \cite{HJS} relies heavily on the fact that both $u$ and $u_V$ satisfy the same equation.} The second difference is that the coefficient functions on ${\Sigma}$ in \eqref{hjs-est} and the {high-order} expansions of \cite{HJS} satisfy a decay of $O(d^{\tau}_{\Sigma})$,} which aligns with the Fredholm theory in the $L^2$-setting of the singular elliptic operator $L_1$ defined by \eqref{L1}, and it is essential for them to derive arbitrarily high-order asymptotic expansions. However, in our more general setting, $T$ is a perturbation of the identity map, and we can only guarantee that $c_i$ are bounded functions and cannot expect them to exhibit decay near the boundary, as demonstrated by {Example \ref{exampleA} and Example \ref{exampleB}}. Example \ref{exampleB} also implies that the term with order $1$ {does indeed appear}. Using the inner-outer gluing method, we prove a general existence result (Lemma \ref{solve-xi1}) for a class of uniformly degenerate elliptic equations in bounded Lipschitz domains. A key novelty of our existence result is that it allows the zero-order coefficient to be positive, or the non-homogeneous term to be non-$L^2$.}

This paper is organized as follows. Section \ref{Pre} presents some preliminary results. Section \ref{proof-thm-1} is devoted to the proof of Theorem \ref{thm-main-1}. In Section \ref{proof-thm-2}, we prove an existence result for uniformly degenerate elliptic equations in Lipschitz domains, which is then {applied} to prove Theorem \ref{thm-main-2}. {We construct some examples with no tangential decay in the left-hand side of \eqref{main-1} in Section \ref{nontanexample}, which guarantee that estimate \eqref{main-1} is optimal.} Finally, in the appendix (Section \ref{app}), we discuss some existence results and estimates for degenerate {or} singular elliptic equations that are crucial to the proof of Theorem \ref{thm-main-2}.
\medskip

{\noindent\it Acknowledgements: Weiming Shen is supported by NSFC Grant 12371208. Zhehui Wang is supported in part by NSFC Grant 12401247 and Guangdong Basic and Applied Basic Research Foundation (Grant No.\,{2023A1515110910}). Jiongduo Xie is partially supported by NSFC Grant W2531006, NSFC Grant 12250710674, NSFC Grant 12031012, and the Center of Modern Analysis-A Frontier Research Center of Shanghai. }

\section{Preliminaries}\label{Pre}
This section provides some preliminary results needed for the subsequent analysis. The following error estimates for the derivatives of a certain function under a $C^2$-diffeomorphism $T$ will be applied in the proof in the case $n=3$.
\begin{lemma}\label{estimate-derivatives-T}
%Suppose $T$ is a $C^2$-diffeomorphism $$T: B_R(0)\to T(B_R(0))\subset\R^n,$$satisfying \eqref{T-map}-\eqref{T(0)-nablaT(0)}. 
Let $\Omega$, $V$, and $T$ be as in assumption \ref{assum}. Then, for any $f\in C^2(V\cap T(B_R(0)))$ and $x\in \Omega\cap B_R(0)$, we have
\begin{align}
    |(\partial_{x_i}(f\circ T))(x)-(\partial_{y_i}f)(Tx)| &\leq C_0|\nabla f(Tx)||x|,\label{condition-3}\\
    |(\partial^2_{x_ix_j}(f\circ T))(x)-(\partial^2_{y_iy_j}f)(Tx)|&\leq C_0(|\nabla f(Tx)|+|x||(\nabla^2f)(Tx)|),\label{condition-4}
\end{align}
where $C_0$ is a constant depending only on the $C^2$-norm of $T$.
\end{lemma}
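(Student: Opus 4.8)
The plan is to reduce the claimed inequalities to a direct, coordinate-level estimate of the difference between the chain-rule expansion of $f\circ T$ and the corresponding derivatives of $f$, exploiting the normalization $T(0)=0$ and $\nabla T(0)=\id$ together with the uniform $C^2$-bound on $T$. First I would fix the component notation $T=(T^1,\dots,T^n)$ and write, for $x\in\Omega\cap B_R(0)$,
\[
\partial_{x_i}(f\circ T)(x)=\sum_{k=1}^n (\partial_{y_k}f)(Tx)\,\partial_{x_i}T^k(x).
\]
Subtracting $(\partial_{y_i}f)(Tx)$ amounts to isolating the $k=i$ term and writing $\partial_{x_i}T^k(x)=\delta_{ik}+\big(\partial_{x_i}T^k(x)-\partial_{x_i}T^k(0)\big)$, since $\partial_{x_i}T^k(0)=\delta_{ik}$ by \eqref{T(0)-nablaT(0)}. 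A first-order Taylor estimate (mean value inequality) gives $|\partial_{x_i}T^k(x)-\partial_{x_i}T^k(0)|\le \|T\|_{C^2(B_R(0))}\,|x|$, and summing over $k$ yields exactly \eqref{condition-3} with $C_0$ a dimensional multiple of $\|T\|_{C^2}$. This step is essentially routine once the normalization is used.

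For \eqref{condition-4} the plan is to differentiate once more and track the two types of terms that appear. Writing
\[
\partial^2_{x_ix_j}(f\circ T)(x)=\sum_{k,l}(\partial^2_{y_ky_l}f)(Tx)\,\partial_{x_i}T^k(x)\,\partial_{x_j}T^l(x)+\sum_k (\partial_{y_k}f)(Tx)\,\partial^2_{x_ix_j}T^k(x),
\]
I would split off the term $(\partial^2_{y_iy_j}f)(Tx)$ from the double sum by expanding $\partial_{x_i}T^k\,\partial_{x_j}T^l=\delta_{ik}\delta_{jl}+\big(\delta_{ik}\delta_{jl}\text{-error}\big)$. Concretely, $\partial_{x_i}T^k(x)=\delta_{ik}+O(\|T\|_{C^2}|x|)$ and $\partial_{x_j}T^l(x)=\delta_{jl}+O(\|T\|_{C^2}|x|)$, so the product equals $\delta_{ik}\delta_{jl}$ plus a term that is $O(\|T\|_{C^2}|x|)$ (using that $T\in C^2$ with $R<1$ makes $\|\nabla T\|$ bounded). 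Multiplying by $(\partial^2_{y_ky_l}f)(Tx)$ and summing produces a contribution bounded by $C_0|x|\,|(\nabla^2 f)(Tx)|$. The Hessian-of-$T$ term is handled by the crude bound $|\partial^2_{x_ix_j}T^k(x)|\le \|T\|_{C^2(B_R(0))}$, which after summation against $|\nabla f(Tx)|$ contributes $C_0|\nabla f(Tx)|$. Adding the two pieces gives \eqref{condition-4}.

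The only mild subtlety — the closest thing to an obstacle — is bookkeeping the dependence of all constants solely on $\|T\|_{C^2(B_R(0))}$ (and the dimension $n$), in particular ensuring that the factors $\|\nabla T\|_{L^\infty}$ that multiply the $|x|$ errors are themselves controlled by $\|T\|_{C^2}$; this uses $R<1$ and $\nabla T(0)=\id$, so that $\|\nabla T\|_{L^\infty(B_R)}\le 1+\|T\|_{C^2}R\le 1+\|T\|_{C^2}$. Beyond that, everything is a Taylor expansion of $T$ to first order around $0$ combined with the chain rule, so I would present the two displays above, insert the elementary estimates for $\partial_{x_i}T^k(x)-\delta_{ik}$ and $\partial^2_{x_ix_j}T^k(x)$, and collect terms. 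No compactness or PDE input is needed; the lemma is purely a statement about composition with a $C^2$ map normalized to agree with the identity to first order at the origin.
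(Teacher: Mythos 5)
Your argument is correct and is essentially the same as the paper's: the paper introduces $\varphi(x)=Tx-x$ (so $\nabla\varphi(0)=0$ and $\nabla^2\varphi=\nabla^2 T$) and then runs exactly the chain-rule expansion and mean-value bound you describe, only with the error $\partial_{x_i}T^k(x)-\delta_{ik}$ written as $\partial_{x_i}\varphi^k(x)$. The bookkeeping of constants via $\|T\|_{C^2}$ is likewise the same, so there is no substantive difference in the two proofs.
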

\begin{proof}
Let $\varphi(x)=Tx-x$ and set $\varphi(x)=(\varphi^1(x), \cdots, \varphi^n(x))$. By assumptions, we have
$$\varphi(0)=0, \,\,\nabla\varphi(0)=0, \,\,\nabla^2\varphi=\nabla^2T.$$
Note 
\begin{equation}\label{vphi}|\partial_{x_i}\varphi^k(x)|=|\partial_{x_i}\varphi^k(x)-\partial_{x_i}\varphi^k(0)|\leq \|\nabla^2\varphi\|_\infty|x|= \|\nabla^2T\|_\infty|x|,\end{equation}
and 
\begin{align*}
    \partial_{x_i}(f\circ T(x))&=\partial_{x_i}(f(x+\varphi(x)))= \partial_{y_k}f(Tx)(\delta_{ki}+\partial_{x_i}\varphi^k(x))\\
    &=\partial_{y_i}f(Tx)+\partial_{y_k}f(Tx)\partial_{x_i}\varphi^k(x),
\end{align*}
we have
\begin{align*}| \partial_{x_i}(f\circ T(x))-\partial_{y_i}f(Tx)|&=|\partial_{y_k}f(Tx)\partial_{x_i}\varphi^k(x)|\leq |\nabla f(Tx)|\|\nabla^2T\|_\infty|x|.
\end{align*}
Furthermore, by \eqref{vphi}, $\nabla^2\varphi=\nabla^2T$, and
\begin{align*}
    &\partial_{x_ix_j}^2(f\circ T(x))=\partial_{x_i}[\partial_{y_j}f(Tx)+\partial_{y_k}f(Tx)\partial_{x_j}\varphi^k(x)]\\
    &=\partial_{y_ly_j}^2f(Tx)(\delta_{li}+\partial_{x_i}\varphi^l(x))+\partial_{y_ly_k}^2f(Tx)(\delta_{li}+\partial_{x_i}\varphi^l(x))\partial_{x_j}\varphi^k(x)\\&\qquad+\partial_{y_k}f(Tx)\partial_{x_ix_j}^2\varphi^k(x),
\end{align*}
we have
\begin{align*}
    &|\partial_{x_ix_j}^2(f\circ T(x))-\partial_{y_iy_j}^2f(Tx)|\\ &\leq |\partial_{y_ly_j}^2f(Tx)\partial_{x_i}\varphi^l(x)|+|\partial_{y_ly_k}^2f(Tx)(\delta_{li}+\partial_{x_i}\varphi^l(x))\partial_{x_j}\varphi^k(x)|\\&\qquad+|\partial_{y_k}f(Tx)\partial_{x_ix_j}^2\varphi^k(x)|\\
    &\leq  |\nabla^2 f(Tx)|\|\nabla^2T\|_\infty|x|+|\nabla^2 f(Tx)|(1+\|\nabla^2T\|_\infty|x|)\|\nabla^2T\|_\infty|x|\\&\qquad+|\nabla f(Tx)|\|\nabla^2 T\|_\infty\\
    &\leq  C_0(|\nabla f(Tx)|+|x||(\nabla^2f)(Tx)|),
\end{align*}
where $C_0$ is a constant depending only on $C^2$-norm of $T$.
\end{proof}
We also need the relationship between $|Tx|$ and $d_g(x, 0)$.
\begin{lemma}\label{distant}
%Under assumptions \eqref{T-map}-\eqref{T(0)-nablaT(0)}, 
Let $\Omega$, $V$, and $T$ be as in assumption \ref{assum}, we have
\begin{equation}\label{dg-|y|}
|Tx|=|x|+O(|x|^2)=d_g(x,0)+O(d^2_g(x,0)),    
\end{equation}
for $x$ near $0$.
\end{lemma}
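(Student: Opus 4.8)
The plan is to establish the two equalities in \eqref{dg-|y|} separately. The first equality, $|Tx| = |x| + O(|x|^2)$, follows directly from the hypotheses on $T$ in assumption \ref{assum}. Writing $\varphi(x) = Tx - x$ as in the proof of Lemma \ref{estimate-derivatives-T}, we have $\varphi(0) = 0$ and $\nabla\varphi(0) = 0$, so Taylor's theorem together with the bound $\|\nabla^2\varphi\|_\infty = \|\nabla^2 T\|_\infty$ gives $|\varphi(x)| \leq \tfrac12\|\nabla^2 T\|_\infty |x|^2$. Hence $\big||Tx| - |x|\big| \leq |Tx - x| = |\varphi(x)| = O(|x|^2)$, which is the first claimed identity with constant depending only on $\|T\|_{C^2(B_R(0))}$.

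For the second equality, $|x| = d_g(x,0) + O(d_g^2(x,0))$, I would use the fact that $(x_1,\dots,x_n)$ is a normal coordinate system for $g$ centered at $0$. In normal coordinates the metric satisfies $g_{ij}(x) = \delta_{ij} + O(|x|^2)$ near the origin (the first-order terms vanish because the Christoffel symbols vanish at $0$), and moreover the geodesics through $0$ are precisely the radial lines $t \mapsto tx/|x|$, along which the $g$-arclength equals the Euclidean length. This is the standard Gauss-lemma property of normal coordinates, and it gives exactly $d_g(x,0) = |x|$ for $x$ in the coordinate ball. Thus the second equality holds with error term identically zero; the $O(d_g^2(x,0))$ is stated only for uniformity of presentation. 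Combining this with the first equality and the elementary observation that $|x| = d_g(x,0)$ implies $O(|x|^2) = O(d_g^2(x,0))$ yields \eqref{dg-|y|}.

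Alternatively, if one does not wish to invoke the exact identity $d_g(x,0) = |x|$ and prefers a self-contained estimate, one can argue that for any smooth metric $g$ with $g_{ij}(0) = \delta_{ij}$ one has $(1 - C|x|)|x|_{\mathrm{eucl}} \leq d_g(x,0) \leq (1 + C|x|)|x|_{\mathrm{eucl}}$ on a small ball: the upper bound comes from using the radial segment as a competitor path and estimating its $g$-length, and the lower bound comes from the reverse estimate $g \geq (1 - C|x|)^2 g_0$ on $B_r(0)$ applied to an arbitrary path from $x$ to $0$. Either way one gets $d_g(x,0) = |x| + O(|x|^2)$, hence also $|x| = d_g(x,0) + O(d_g^2(x,0))$.

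I do not anticipate any real obstacle here; this is a routine consequence of the definition of normal coordinates and the Taylor expansion of $T$. The only point requiring a small amount of care is making the error constants uniform — they should depend only on $n$, $g$, and $\|T\|_{C^2(B_R(0))}$ — but this is immediate from the explicit Taylor bounds above and the fact that $g$ is fixed.
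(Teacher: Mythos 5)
Your proposal is correct and follows essentially the same path as the paper: Taylor expansion of $T$ (using $\varphi(0)=0$, $\nabla\varphi(0)=0$) for the first equality, and the Gauss-lemma identity $d_g(x,0)=|x|$ in normal coordinates for the second. The paper spells out the Gauss lemma via the integral $d_g(x,0)=\int_0^1\sqrt{g_{ij}(tx)x_ix_j}\,\mathrm{d}t=\int_0^1|x|\,\mathrm{d}t$, while you state the same fact more qualitatively (and offer an unnecessary but valid two-sided barrier argument as backup), but the content is identical.
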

\begin{proof}
    The first equality comes from the Taylor expansion of $T$ near $0$. 
    Note $(x_1, \cdots, x_n)$ is a normal coordinate system of $g$ with $0$ as the origin, %we know near $0$, $$g_{ij}(x)=\delta_{ij}+\frac{1}{2}\partial_l\partial_kg_{ij}(0)x_kx_l+O(|x|^3).$$
    we know $\mathrm{exp}_0(tx):[0, 1]\to B_R(0)$ is a geodesic from $0$ to $x$ under the metric $g$, and $\mathrm{exp}_0(tx)=(tx_1, \cdots, tx_n)$ in the normal coordinate. We simply write $c(t)=\mathrm{exp}_0(tx)$. Then,
    %$$|x|=L_{g_0}(c)=\int_0^1\sqrt{g_0(\dot{c}(t),\dot{c}(t))} \d t,$$
    %and, 
    \begin{align*}
        d_g(x, 0)=\int_0^1\sqrt{g(\dot{c}(t),\dot{c}(t))} \d t&=\int_0^1\sqrt{g_{ij}(tx)x_ix_j} \d t\\
        \text{(by Gauss lemma, $g_{ij}(tx)tx_itx_j=|tx|^2$)}&=\int_0^1|x|\d t=|x|.
        %\text{(let $s=rt$)}&=\int_0^r\sqrt{g_{ij}(s\frac{x}{r})\frac{x_i}{r}\frac{x_j}{r}} \d s\\
       %\text{(let $u=\frac{x}{r}$)}&=\int_0^r\sqrt{g_{ij}(su)u_iu_j} \d s\\
    %&=\int_0^r\sqrt{\delta_{ij}u_iu_j+\frac{1}{2}\partial_l\partial_kg_{ij}(0)u_ku_lu_iu_js^2+O(s^3)} \d s\\
       %&=\int_0^r 1+\frac{1}{4}\partial_l\partial_kg_{ij}(0)u_ku_lu_iu_js^2+O(s^3)\d s\\
       %&=r+O(r^3).
    \end{align*}
\end{proof}
Let $\xi\in C^{\infty}(\Sigma)$ be a positive
function in $\Sigma$ satisfying \eqref{equation-Yamabe-2}-\eqref{boundary-condition-2}. By \cite[Lemma 2.4 and Lemma 2.5]{HJS}, the functions $\xi$ and $\rho=\xi^{-\frac{2}{n-2}}$ have the following property.
\begin{lemma}\label{rho-property}
Let $\Sigma\subsetneq\mathbb{S}^{n-1}$ be a Lipschitz domain, $d_\Sigma=d_{g_{\mathbb{S}^{n-1}}}(\cdot,\partial\Sigma)$, and $\xi\in C^{\infty}(\Sigma)$ be a positive
function in $\Sigma$ satisfying \eqref{equation-Yamabe-2}-\eqref{boundary-condition-2}. Then,
\begin{equation}\label{xi-estimate}
c_1\leq d^{\frac{n-2}{2}}_{\Sigma}\xi\leq c_2{\quad\mbox{ in } \Sigma,}
\end{equation}
and, for any integer $k\geq 0$,
\begin{equation}\label{xi-der-estimate}
d^{\frac{n-2}{2}+k}_{\Sigma}|\nabla^k_\theta \xi|\leq C{\quad\mbox{ in } \Sigma.}
\end{equation}
Moreover, set
\begin{equation}\label{rho-xi}
\rho=\xi^{-\frac{2}{n-2}},
\end{equation}
then, $\rho\in C^{\infty}(\Sigma)\cap \mathrm{Lip}(\bar\Sigma)$, $\rho>0$ in $\Sigma$, $\rho=0$ on $\partial\Sigma$, and 
\begin{equation}\label{rho-estimat}
c_3\leq \frac\rho{d_\Sigma}\leq c_4{\quad\mbox{ in } \Sigma.}
\end{equation}
Here, $c_1$, $c_2$, $c_3$, and $c_4$ are positive constants depending only on $n$ and $\Sigma$, and $C$ is a positive constant depending only on $n$, $k$, and $\Sigma$.
\end{lemma}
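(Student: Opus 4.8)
The plan is to reduce everything to the sharp two-sided estimate
\[
 c_1 \le d_\Sigma^{\frac{n-2}{2}}\,\xi \le c_2 \quad\text{in }\Sigma,
\]
from which the derivative bounds follow by rescaling and the properties of $\rho$ follow by elementary algebra; this two-sided estimate is \cite[Lemma~2.4 and Lemma~2.5]{HJS}, and I would recall its proof. For the \emph{upper} bound, fix $p\in\Sigma$, put $\delta=d_\Sigma(p)$, and for $0<\varepsilon<\delta$ note that the geodesic ball $B_{\delta-\varepsilon}(p)$ is compactly contained in $\Sigma$, where $\xi$ is smooth and finite on $\partial B_{\delta-\varepsilon}(p)$; by the comparison principle $\xi\le\Phi_{\delta-\varepsilon}$ on $B_{\delta-\varepsilon}(p)$, where $\Phi_r$ is the maximal solution of \eqref{equation-Yamabe-2} on $B_r(p)$ with $+\infty$ boundary data. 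On a small geodesic ball the metric is $O(r)$-close to Euclidean and the zeroth-order term is lower order, so rescaling by $r$ and comparing with the explicit radial Loewner--Nirenberg solution on the unit ball gives $\Phi_r(p)\le Cr^{-(n-2)/2}$; letting $\varepsilon\to0$ yields $\xi(p)\le Cd_\Sigma(p)^{-(n-2)/2}$.

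The \emph{lower} bound is the main obstacle, and it is exactly here that the Lipschitz (rather than $C^2$) regularity of $\partial\Sigma$ bites. I would pass to the Euclidean cone $V$ over $\Sigma$, on which the solution is $u_V(x)=|x|^{-(n-2)/2}\xi(x/|x|)$, so that the claim on $\xi$ near $\partial\Sigma$ is equivalent to $u_V\ge c\,\dist(\cdot,\partial V)^{-(n-2)/2}$ near the lateral boundary of $V$. Since $V$ is Lipschitz it satisfies a uniform exterior cone condition: at each $z\in\partial V$ there is a round cone $\hat K_z$ with vertex $z$ and half-angle bounded below in terms of the Lipschitz character of $\Sigma$, with $\hat K_z\cap B_\rho(z)\subset\mathbb{R}^n\setminus V$. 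Then $V\subset\mathbb{R}^n\setminus\overline{\hat K_z}=:\hat W_z$, so by domain monotonicity of the maximal solution $u_V\ge u_{\hat W_z}$ on $V$; and $\hat W_z$ is itself a cone with vertex $z$ whose cross-section is a spherical cap, hence $u_{\hat W_z}(x)=|x-z|^{-(n-2)/2}\hat\eta\big((x-z)/|x-z|\big)$ with $\hat\eta$ the smooth positive solution on that cap. Now for $x_0\in V$ with nearest boundary point $z$ one checks from the definitions that $\dist(x_0,\partial\hat K_z)=|x_0-z|=\dist(x_0,\partial V)$, the nearest point of $\partial\hat K_z$ being the vertex itself; this forces $(x_0-z)/|x_0-z|$ to lie well inside the cap, where $\hat\eta\ge c>0$ with $c$ depending only on the half-angle, and hence $u_V(x_0)\ge c\,\dist(x_0,\partial V)^{-(n-2)/2}$. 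Undoing the homogeneity, together with the trivial bound $\xi\ge\text{const}$ on compact subsets of $\Sigma$, this gives the lower bound. The real content that must be imported is the existence, uniqueness and comparison/monotonicity theory for the maximal solution on Lipschitz cone domains (the paper already invokes this, cf.\ \cite{AMC,LN,SW}); I would follow \cite{HJS}.

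Granting the two-sided estimate, the derivative bounds come by rescaling: for $p\in\Sigma$, $\delta=d_\Sigma(p)$, set $\tilde\xi(z)=\delta^{(n-2)/2}\xi(\exp_p(\delta z))$ on $B_{1/2}(0)$, so $\tilde\xi$ solves $\Delta_{\tilde g}\tilde\xi-\delta^2\frac{(n-2)^2}{4}\tilde\xi=\frac{n(n-2)}{4}\tilde\xi^{(n+2)/(n-2)}$ with $\tilde g$ smooth, uniformly elliptic, of uniformly bounded geometry, and $c_1'\le\tilde\xi\le c_2'$ on $B_{1/2}(0)$ since $d_\Sigma\sim\delta$ there; interior Schauder estimates and a bootstrap give $\|\tilde\xi\|_{C^k(B_{1/4}(0))}\le C_k$, hence $|\nabla^k\tilde\xi(0)|\le C_k$, which after undoing the scaling is $d_\Sigma^{(n-2)/2+k}|\nabla_\theta^k\xi|\le C_k$ near $\partial\Sigma$ (and away from $\partial\Sigma$ one just uses smoothness of $\xi$ on compacts). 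Finally, $\rho=\xi^{-2/(n-2)}$ is smooth and positive in $\Sigma$; from $\rho/d_\Sigma=(d_\Sigma^{(n-2)/2}\xi)^{-2/(n-2)}$ the two-sided estimate gives $c_3\le\rho/d_\Sigma\le c_4$, so $\rho\le c_4 d_\Sigma\to0$ at $\partial\Sigma$ and $\rho\in C(\bar\Sigma)$ with $\rho=0$ on $\partial\Sigma$; and $|\nabla_\theta\rho|=\frac{2}{n-2}\xi^{-n/(n-2)}|\nabla_\theta\xi|\le C\xi^{-n/(n-2)}d_\Sigma^{-n/2}\le C$ by the $k=1$ derivative bound and $\xi\ge c_1 d_\Sigma^{-(n-2)/2}$, whence $\rho\in\mathrm{Lip}(\bar\Sigma)$.
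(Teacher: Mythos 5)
The paper supplies no proof of this lemma; it is quoted verbatim from \cite[Lemma 2.4, Lemma 2.5]{HJS}, so there is no internal argument to compare against. Your proof is the standard route one would take to prove the cited result directly: barrier comparison with the radial Loewner--Nirenberg solution on inscribed geodesic balls for the upper bound, exterior-cone comparison for the lower bound, rescaled interior Schauder plus bootstrap for the derivative bounds, and then algebra for the properties of $\rho=\xi^{-2/(n-2)}$. The last two steps are correct as written, and the deduction that the nearest point of $\partial\hat K_z$ to $x_0$ is the vertex $z$ is also correct once the inclusion $V\subset\mathbb{R}^n\setminus\overline{\hat K_z}$ is in hand (then $\partial\hat K_z\cap V=\emptyset$, so $\mathrm{dist}(x_0,\partial\hat K_z)\ge\mathrm{dist}(x_0,\partial V)=|x_0-z|$, and $z\in\partial\hat K_z$ gives the reverse inequality), though that observation is actually superfluous: $\hat\eta$ has a strictly positive infimum over the entire open cap, so you do not need $(x_0-z)/|x_0-z|$ to be ``well inside.''

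The one step that deserves more care is the jump from the local statement $\hat K_z\cap B_\rho(z)\subset\mathbb{R}^n\setminus V$ to the global inclusion $V\subset\mathbb{R}^n\setminus\overline{\hat K_z}$. For a generic Lipschitz domain these are genuinely different: the exterior cone condition only provides a truncated cone, and a truncated cone does not carry the homogeneity you then use to write $u_{\hat W_z}(x)=|x-z|^{-(n-2)/2}\hat\eta((x-z)/|x-z|)$. What saves you here is that $V$ is a cone over $\Sigma$, so the Lipschitz character of the lateral boundary is scale-invariant, and one can indeed take $\hat K_z$ to be an infinite round cone lying entirely in $\mathbb{R}^n\setminus V$ with half-angle bounded below uniformly in $z\neq0$; this must be said explicitly, because the argument collapses without it. With this point flagged and justified, the proof is sound and is in line with how these estimates are established in the references cited in the paper (cf.\ \cite{HS} and \cite{HJS}).
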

%In the following, we use the notation $f\sim g$  to indicate that two functions $f$ and $g$ satisfy $c_1 f\leq g\leq c_2 f$ for some positive constants $c_1$ and $c_2$.}

\section{Proof of Theorem \ref{thm-main-1}}\label{proof-thm-1}
\begin{proof}[Proof of Theorem \ref{thm-main-1}]
    For $x\in B_R(0)$, set $y=Tx$ and $v(y)=u(T^{-1}y)$. By a direct calculation, $v$ satisfies the following equation: $$\L v:=a_{ij}v_{ij}+b_iv_i+cv=\frac{n(n-2)}{4}v^{\frac{n+2}{n-2}} \quad \mbox{ in } V\cap T(B_R(0)),$$
    where 
\begin{align*}
a_{ij}=g^{kl}T^i_kT^j_l,\, b_i=g^{kl}T_{kl}^i+\frac{1}{\sqrt{\mathrm{det}g}}\partial_{x_k}(\sqrt{\mathrm{det}g}g^{kl})T^i_l,\, \mbox{ and } c=-\frac{n-2}{4(n-1)}S_g.
\end{align*}
Note 
$$g_{ij}=\delta_{ij}+O(|x|^2),\, \mbox{ and } g^{ij}=\delta_{ij}+O(|x|^2),$$ and $T\in C^2$ with $\nabla T(0)=\id$, then $a_{ij}\in C^1$, $b_i\in C^0$ and
\begin{equation}\label{coeff}
    a_{ij}=\delta_{ij}+O(|y|),\,\mbox{ and } b_i=O(1).
\end{equation}
Let 
\begin{equation*}
    \beta=\left\{\begin{aligned}
        1-\frac{2}{n-2}, \quad \mbox{ if } n\geq 4,\\
        0,\quad \mbox{ if } n=3,
    \end{aligned} \right.
\end{equation*}
and $r=|y|$. For some positive constants $A$ and $B$ to be determined, we set $$w(y)=u_V(y)+Au_V^\beta(y)+Bu_V(y)r, \quad  y\in V\cap T(B_R(0)).$$
Set 
$$D_4=(a_{ij}-\delta_{ij})w_{ij}+b_iw_i+cw,$$
by a direct calculation, we have
\begin{align*}&\L w-\frac{n(n-2)}{4}w^{\frac{n+2}{n-2}}=\Delta w-\frac{n(n-2)}{4}w^{\frac{n+2}{n-2}}+D_4\\&=\Delta w-\frac{n(n-2)}{4}u_V^{\frac{n+2}{n-2}}(y)\left(1+Au_V^{\beta-1}(y)+Br\right)^{\frac{n+2}{n-2}}+D_4 \\
&\leq\Delta w-\frac{n(n-2)}{4}u_V^{\frac{n+2}{n-2}}(y)\left(1+\frac{n+2}{n-2}(Au_V^{\beta-1}(y)+Br)\right)+D_4\\
&= \Delta\left(Au_V^\beta(y)+Bu_V(y)r\right)-\frac{n(n+2)}{4}u_V^{\frac{n+2}{n-2}}(y)\left(Au_V^{\beta-1}(y)+Br\right)+D_4\\&:= D_1+D_2+D_3+D_4,\end{align*}
where 
\begin{align*}
    D_1&=-\frac{n(n+2)}{4}\left(1-\frac{n-2}{n+2}\beta\right)Au_V^{\beta+\frac{4}{n-2}}+A\beta(\beta-1)u_V^{\beta-2}|\nabla u_V|^2,\\
    D_2&=-\frac{n(n+2)}{4}\left(1-\frac{n-2}{n+2}\right)Bru_V^{\frac{n+2}{n-2}},\\
    D_3&=2B\nabla r\cdot\nabla u_V+B\frac{n-1}{r}u_V.
    %D_4&=(a_{ij}-\delta_{ij})w_{ij}+b_iw_i+cw.
\end{align*}
By \eqref{coeff}, we know
\begin{align*}
|D_4|&=O(r)|\nabla^2 w|+O(1)|\nabla w|+ O(1)|w|.
\end{align*}
According to \cite[Remark 3.3, Lemma 3.4]{HS}, we know that there is a constant $C_1>1$ depending only on $n$ and the size of the exterior cones such that for $y\in V\cap T(B_R(0))$,
\begin{equation}\label{1-2-order}
   d^2(y)|\nabla^2u_V(y)|+d(y)|\nabla u_V(y)|\leq C_1 u_V(y),
\end{equation}
and 
\begin{equation}\label{0-order}
C_1^{-1}\leq d^{\frac{n-2}{2}}(y)u_V(y)\leq 2^{\frac{n-2}{2}},
\end{equation}
where $d(y)=d(y, \partial V)$. Note $d\leq r\leq1$, we have
\begin{align*}
|w|&\leq \frac{u_V}{d}\left(r+A u_V^{\beta-1}r+Br^2\right),\\
|\nabla w|&\leq |\nabla u_V|+A\beta u_V^{\beta-1}|\nabla u_V|+Br|\nabla u_V|+Bu_V\\
&\leq C_1\frac{u_V}{d}\left(1+A\beta u_V^{\beta-1}+2Br\right),
\end{align*}
and
\begin{align*}
|\nabla^2 w|&\leq (1+A\beta u_V^{\beta-1}+Br)|\nabla^2 u_V|+A\beta|\beta -1| u_V^{\beta-2}|\nabla u_V|^2+2B|\nabla u_V|+2B\frac{u_V}{r}\\
&\leq C_1\frac{u_V}{d^2}\left(1+A\beta u_V^{\beta-1}+C_1A\beta|\beta -1| u_V^{\beta-1}+B(3+\frac{2}{C_1})r\right).
\end{align*}
Hence,
\begin{equation}
    |D_4|\leq C_2\frac{u_Vr}{d^2}(1+A(\beta+1) u_V^{\beta-1}+Br),
\end{equation}
where $C_2>0$ is a constant depending only on $n$ and the size of the exterior cones.
Note $0\leq\beta<1$, we have
\begin{equation*}
D_1\leq -\frac{n(n+2)}{4}\left(1-\frac{n-2}{n+2}\beta\right)Au_V^{\beta+\frac{4}{n-2}}
    =\left\{
        \begin{aligned}
    -\frac{n(n+2)}{4}\left(1-\frac{n-2}{n+2}\beta\right)Au_Vu_V^{\frac{2}{n-2}},&\quad n\geq 4, \\
    -\frac{n(n+2)}{4}\left(1-\frac{n-2}{n+2}\beta\right)Au_Vu_V^{3},&\quad n=3.
\end{aligned}\right.
\end{equation*}
By choosing $0<r_1<R$ small enough, such that $u_V(y)\geq 1$ for any $y\in V\cap T(B_R(0))\cap B_{r_1}(0)$, we have
\begin{equation}
    D_1\leq -\frac{n(n+2)}{4}\left(1-\frac{n-2}{n+2}\beta\right)\frac{C_1^{-\frac{2}{n-2}}Au_V}{d}, \quad \mbox{for } y\in V\cap T(B_R(0))\cap B_{r_1}(0).
\end{equation} 
Similarly, we have
\begin{equation}
    -4n \frac{Bru_V}{d^2}\leq  D_2\leq -C_1^{-\frac{4}{n-2}}n\frac{Bru_V}{d^2},
\end{equation}
and
\begin{equation}
  D_3\leq (2C_1+n-1)\frac{Bu_V}{d}.
\end{equation}
Let $$A=\frac{4}{n(n+2)}(1-\frac{n-2}{n+2}\beta)^{-1}\left(2C_1+n-1\right)C_1^{\frac{4}{n-2}}B:=C_3B,$$
we get
\begin{equation}\label{D1+3}
    D_1+D_3\leq 0.
\end{equation}
Note
\begin{align*}
    D_2+D_4\leq C_2\frac{u_Vr}{d^2}\left(1+A(\beta+1) u_V^{\beta-1}+Br-BC_1^{-\frac{4}{n-2}}C_2^{-1}n\right),
\end{align*}
and 
\begin{equation*}
A(\beta+1) u_V^{\beta-1}\leq\left\{
        \begin{aligned}
    2AC_1^{\frac{2}{n-2}}d,&\quad n\geq 4, \\
    AC_1d^{\frac{n-2}{2}},&\quad n=3,
\end{aligned}\right.
\end{equation*}
%\begin{equation*}
%A\beta u_V^{\beta-1}\leq\left\{
%        \begin{aligned}
%    AC_1^{\frac{2}{n-2}}d,&\quad n\geq 4, \\
%    0,&\quad n=3,
%\end{aligned}\right.
%\end{equation*}
we have
\begin{align*}
    D_2+D_4&\leq \left\{
\begin{aligned}
    C_2\frac{u_Vr}{d^2}\left(1+2AC_1^{\frac{2}{n-2}}r+Br-BC_1^{-\frac{4}{n-2}}C_2^{-1}n\right),&\quad n\geq 4, \\
    C_2\frac{u_Vr}{d^2}\left(1+AC_1d^{\frac{n-2}{2}}+Br-BC_1^{-\frac{4}{n-2}}C_2^{-1}n\right),&\quad n=3,
\end{aligned}\right.\\ &\leq C_2\frac{u_Vr}{d^2}\left(1+\left((2C_3C_1^{\frac{2}{n-2}}+1)r+C_3C_1r^{\frac{1}{2}}-C_1^{-\frac{4}{n-2}}C_2^{-1}n\right)B\right).
\end{align*}
%\begin{align*}
%    D_2+D_4&\leq \left\{
%\begin{aligned}
%    C_2\frac{u_Vr}{d^2}\left(1+AC_1^{\frac{2}{n-2}}r+Br-BC_1^{-\frac{4}{n-2}}C_2^{-1}n\right),&\quad n\geq 4, \\
%    C_2\frac{u_Vr}{d^2}\left(1+Br-BC_1^{-\frac{4}{n-2}}C_2^{-1}n\right),&\quad n=3,
%\end{aligned}\right.\\ &\leq C_2\frac{u_Vr}{d^2}\left(1+\left((C_3C_1^{\frac{2}{n-2}}+1)r-C_1^{-\frac{4}{n-2}}C_2^{-1}n\right)B\right).
%\end{align*}
Choose $0<r_2<r_1$ small enough and $B>>1$ such that 
$$(2C_3C_1^{\frac{2}{n-2}}+1)r_2+C_3C_1r_2^{\frac{1}{2}}-C_1^{-\frac{4}{n-2}}C_2^{-1}n \leq -\frac{C_1^{-\frac{4}{n-2}}C_2^{-1}n}{2},\quad B\geq \frac{2C_1^{\frac{4}{n-2}}C_2}{n},$$
%$$(C_3C_1^{\frac{2}{n-2}}+1)r_2-C_1^{-\frac{4}{n-2}}C_2^{-1}n \leq -\frac{C_1^{-\frac{4}{n-2}}C_2^{-1}n}{2},\quad B\geq \frac{2C_1^{\frac{4}{n-2}}C_2}{n},$$
we get \begin{equation}D_2+D_4\leq 0, \quad \mbox{ for } y\in V\cap T(B_R(0))\cap B_{r_2}(0),\end{equation}
which together with \eqref{D1+3} imply 
$$\L w\leq \frac{n(n-2)}{4}w^{\frac{n+2}{n-2}} \quad \mbox{ in } V\cap T(B_R(0))\cap B_{r_2}(0).$$

For any $s>0$ and $y\in B_s(0)$, we set $u_s(y)=(\frac{2s}{s^2-|y|^2})^{\frac{n-2}{2}}$. By direct calculations, we know that
\begin{align*}
    \Delta u_s&=\frac{n(n-2)}{4}u_s^{\frac{n+2}{n-2}} \quad \mbox{ in } B_s(0),\\
    \L (2u_s)&\leq\frac{n(n-2)}{4}(2u_s)^{\frac{n+2}{n-2}} \quad \mbox{ in } B_s(0),
\end{align*}
for $s$ small enough.
Therefore, by choosing $0<s<r_2$ small enough, we have that in $V\cap T(B_R(0))\cap B_{s}(0)$,
\begin{align}\label{thm1com}
    \L (w+2u_s)\leq\frac{n(n-2)}{4}(w^{\frac{n+2}{n-2}}+(2u_s)^{\frac{n+2}{n-2}})\leq \frac{n(n-2)}{4}(w+2u_s)^{\frac{n+2}{n-2}}.
\end{align}

Without loss of generality, we assume $$V\cap T(B_R(0))\cap B_{s}(0)=\{(y', y_n)\in B_s(0):y_n>f(y')\},$$ where $f$ is Lipschitz and $f(0')=0$. Set $F(\cdot)=\frac{n(n-2)}{4}(\cdot)^{\frac{n+2}{n-2}}$ and $$v^\epsilon(y)=v(y_1,\cdots,y_{n-1},y_n+\epsilon).$$
By \eqref{thm1com} and
$\L v=F(v)$, we have for any $y\in V\cap T(B_R(0))\cap B_{s}(0),$
\begin{align*}
    a_{ij}(w+2u_s-v^\epsilon)_{ij}+b_i(w+2u_s-v^\epsilon)_i&\leq F(w+2u_s)-F(v^\epsilon)-c(w+2u_s-v^\epsilon)\\
    &\leq F'(\varphi)(w+2u_s-v^\epsilon)-c(w+2u_s-v^\epsilon),
\end{align*}
where $\varphi$ is between $w+2u_s$ and $v^\epsilon$. Recall $c=-\frac{n-2}{4(n-1)}S_g$, we may assume $|c|\leq C_g$, where $C_g>0$ is a constant depending only on $n$ and $g$. In addition, we can take $s>0$ small enough such that
\begin{equation}\label{compare-G}
    \frac{n(n+2)}{4}(w+2u_s)^{\frac{4}{n-2}}>C_g\quad \mbox{ in } V\cap T(B_R(0))\cap B_{s}(0).
\end{equation}
Set $G=V\cap T(B_R(0))\cap B_{s}(0)\cap\{y: v^\epsilon(y)>(\frac{4C_g}{n(n+2)})^{\frac{n-2}{4}}\}$, by \eqref{compare-G}, we have
$$F'(\varphi)>C_g\quad\mbox{ in } G,$$
which implies $F'(\varphi)-c>0$ in $G$.
Note $u_s=+\infty$ on $\partial B_s$ and $w=+\infty$ on {$\partial V\cap T(B_R(0))$. Since $0<s<<R$}, it follows from \eqref{compare-G} that $w+2u_s>v^\epsilon$ on $\partial G$. Then, by
$$a_{ij}(w+2u_s-v^\epsilon)_{ij}+b_i(w+2u_s-v^\epsilon)_i-(F'(\varphi)-c)(w+2u_s-v^\epsilon)\leq 0\quad \mbox{ in } G,$$
and the maximum principle, we know
$$w+2u_s-v^\epsilon\geq 0\quad \mbox{ in } G.$$
On the other hand, by \eqref{compare-G}, in $V\cap T(B_R(0))\cap B_{s}(0)\cap\{y: v^\epsilon(y)\leq(\frac{4C_g}{n(n+2)})^{\frac{n-2}{4}}\}$,  we have 
$$v^\epsilon\leq\left(\frac{4C_g}{n(n+2)}\right)^{\frac{n-2}{4}}\leq w+2u_s.$$
So, $$w+2u_s-v^\epsilon\geq 0\quad \mbox{ in } V\cap T(B_R(0))\cap B_{s}(0),$$ and further, by letting $\epsilon\to 0$, we get
\begin{equation}\label{thm-rhs}
    v \leq w+2u_s \quad \mbox{ in } V\cap T(B_R(0))\cap B_{s}(0).
\end{equation}
 Note that in $B_{\frac{s}{2}}(0)$, $u_s\leq C(n, s)$, where $C(n, s)>0$ is a constant depending only on $n$ and $s$, then for any $0<s'\leq \frac{s}{2},$ by \eqref{thm-rhs} and \eqref{0-order} we have
\begin{align*}
    v-u_V&\leq \left\{
\begin{aligned}
    u_V\left(AC_1^{\frac{2}{n-2}}r+Br+2C(n, s)C_1r^{\frac{n-2}{2}}\right),&\quad n\geq 4, \\
    u_V\left(Br+(A+2C(n, s))C_1r^{\frac{1}{2}}\right),&\quad n=3,
\end{aligned}\right.\\
&\leq\left\{
\begin{aligned}
   C u_V|y|,&\quad n\geq 4, \\
    C u_V|y|^{\frac{1}{2}},&\quad n=3,
\end{aligned}\right.
\end{align*}
where $C>0$ is a constant depending only on $n$, the size of the exterior cones, and $g$. By a process similar to that in the proof of \cite[Theorem 6.2]{SW} and note that we have \eqref{condition-3}-\eqref{condition-4} by Lemma \ref{estimate-derivatives-T}, we can improve the estimate in dimension $3$ and get 
$$v-u_V\leq Cu_V|y| \quad \mbox{ in } V\cap T(B_R(0))\cap B_{s'}(0).$$
By taking $\tilde{w}=u_V-Au_V^\beta-Bu_Vr$ and repeating a similar argument, we can prove for $s''>0$ small enough,
$$v-u_V\geq -Cu_V|y|\quad \mbox{ in } V\cap T(B_R(0))\cap B_{s''}(0),$$
where $C>0$ is a constant depending only on $n$, the size of the exterior cones, and $g$.
Therefore, by Lemma \ref{distant}, $$\left|\frac{u(x)}{u_V(Tx)}-1\right|\leq C|Tx|\leq Cd_g(x, 0) \quad \mbox{ in } \Omega\cap B_{r_0}(0),$$ where $r_0$ is a constant small enough.

\end{proof}

%Let $v(y)=u(T^{-1}y)$ and $y=Tx$. we now derive the equation of $v$. Note 
%$$\partial_{x_i}u=\partial_{y_p}v\partial_{x_i}T^p, \mbox{ and } \partial_{x_ix_j}^2u=\partial_{y_py_q}^2v\partial_{x_i}T^p\partial_{x_j}T^q+\partial_{y_p}v\partial_{x_ix_j}^2T^p,$$
%then,
%\begin{align*}
%   &\frac{n(n-2)}{4}v^{\frac{n+2}{n-2}}(y)=\left(\frac{1}{\sqrt{\mathrm{det}g}}\partial_{x_i}(\sqrt{\mathrm{det}g}g^{ij}\partial_{x_j}u)\right)(T^{-1}y)-\frac{n-2}{4(n-1)}S_g(T^{-1}y)v(y)\\
%   =&(g^{ij}\partial_{x_ix_j}^2u)(T^{-1}y)+\left(\frac{1}{\sqrt{\mathrm{det}g}}\partial_{x_i}(\sqrt{\mathrm{det}g}g^{ij})\partial_{x_j}u\right)(T^{-1}y)-\frac{n-2}{4(n-1)}S_g(T^{-1}y)v(y)\\
%   =&g^{ij}(v_{pq}T^p_iT^q_j+v_pT^p_{ij})+\frac{1}{\sqrt{\mathrm{det}g}}\partial_{x_i}(\sqrt{\mathrm{det}g}g^{ij})v_pT^p_j-\frac{n-2}{4(n-1)}S_gv\\
%   =&a_{ij}v_{ij}+b_iv_i+cv:=\L v,
%\end{align*}
%where
%\begin{align*}
%a_{ij}=g^{kl}T^i_kT^j_l,\, b_i=g^{kl}T_{kl}^i+\frac{1}{\sqrt{\mathrm{det}g}}\partial_{x_k}(\sqrt{\mathrm{det}g}g^{kl})T^i_l,\, \mbox{ and } c=-\frac{n-2}{4(n-1)}S_g.
%\end{align*}

\section{Proof of Theorem \ref{thm-main-2}}\label{proof-thm-2}
In this section, we study the first order expansion of local positive solutions of \eqref{equation-Yamabe-1}-\eqref{boundary-condition-1} near singular points on $\partial\Omega$ {when $T$ has higher regularity.} Formal calculations lead to an elliptic operator such as \eqref{L0} or \eqref{L1}. However, we cannot use Theorem \ref{outer-problem-theorem} or Lemma \ref{inner-problem-lemma} directly since $c$ in \eqref{L0} may be positive or $f$ in \eqref{L1-eigenvalue} may not belong to $L^2(\Sigma)$.
Specifically, for some constant $\sigma\in\mathbb R$ and positive integer $k$, we study the following elliptic equation on $\Sigma$:
\begin{equation}\label{xi1-equation}
\rho^2\Delta_\theta h-\frac{n(n+2)}{4}h+\sigma_k\rho^2h=\rho^2 (F+\sigma\phi_1),
\end{equation}
where $\rho$ is defined in \eqref{rho-xi},
\begin{equation}\label{sigma-k}
\sigma_k:= \Big(k-\frac n 2+1\Big)\Big(k+\frac n2-1\Big),   
\end{equation}
$\phi_1\in H^1_0(\Sigma)\cap C^{\infty}(\Sigma)\cap C(\bar\Sigma)$ is the eigenfunction corresponding to the first eigenvalue $\lambda_1$ of $L_1$ in \eqref{L1}, satisfying $\phi_1>0$ on $\Sigma$ with $\|\phi_1\|_{L^2(\Sigma)}=1$, and $F\in C^{\infty}(\Sigma)$ satisfies 
\begin{equation}\label{estimate-F}
|F|\leq \bar{C} \xi^{\frac{n+2}{n-2}}\quad\text{on }\Sigma,
\end{equation}
for some positive constant $\bar C>0$.
In the following, we split equation \eqref{xi1-equation} into two parts: the first is solved via Theorem \ref{outer-problem-theorem}, and the second via Lemma \ref{inner-problem-lemma}.
\begin{lemma}\label{solve-xi1}
Let $\Sigma\subsetneq\mathbb{S}^{n-1}$ be a Lipschitz domain. Suppose $\xi\in C^{\infty}(\Sigma)$ is a positive
function on $\Sigma$ satisfying \eqref{equation-Yamabe-2}-\eqref{boundary-condition-2}, and $F\in C^{\infty}(\Sigma)$ satisfies \eqref{estimate-F}. For a given positive integer $k$, denote by $\sigma_k$ the constant defined in
%let $\sigma_k$ be a constant defined by 
\eqref{sigma-k} with $\sigma_k\leq \lambda_1$.

$\mathrm{(i)}$ If $\sigma_k<\lambda_1$, then for $\sigma=0$, there exists a solution $h$ of \eqref{xi1-equation};

$\mathrm{(ii)}$ If $\sigma_k=\lambda_1$, then for some $\sigma\in\mathbb R$, there exists a solution $h$ of \eqref{xi1-equation}.

In both cases, \begin{equation}\label{estimate-xi1}
|h|\leq C\xi\quad\text{on }\Sigma,
\end{equation}
where $C$ is a positive constant depending only on $n$, $k$, $\Sigma$, and $\bar C$.
{Moreover, if we further assume that for some constant $\beta\in(0,1)$,
\begin{equation}\label{F-Holder}
d^\beta_{\Sigma}(x)[\rho^2F]_{C^\beta(B_{d_\Sigma(x)/4}(x))}\leq \tilde{C}\|\xi\|_{L^\infty(B_{d_{\Sigma}(x)/2}(x))},
\end{equation}
for any $x\in\Sigma$, then
%$F$ is a linear combination of $\nabla_\theta \xi$ and $\nabla^2_\theta \xi$ with Lipschitz coefficients $(h_\alpha)_{\alpha\in I}$, then
\begin{equation}\label{estimate-xi1-derivative}
d_{\Sigma}|\nabla_\theta h|+d^2_{\Sigma}|\nabla^2_\theta h|+d_\Sigma^{2+\beta}[\nabla^2_\theta h]_{C^{\beta}(B_{d_\Sigma(\cdot)/2}(\cdot))}\leq C\xi\quad\text{on }\Sigma,
\end{equation}
where $C$ is a positive constant depending only on $n$, $k$, $\beta$, $\Sigma$, $\bar C$, and $\tilde C$. %$C^{0,1}$-norms of $h_\alpha$.
}
\end{lemma}
\begin{proof}
Set $\rho_0=\sqrt{n/(2k^2)}$. Note that on $\{\rho<\rho_0\}$,
\begin{equation}\label{c-condition}
\begin{aligned}
&-\frac{n(n+2)}{4}+\Big(k-\frac n 2+1\Big)\Big(k+\frac n2-1\Big)\rho^2+\frac{(n-2)^2}{4}\rho^2+\frac{n(n-1)}{4}\\
&\qquad=-\frac{3n}4 +k^2\rho^2<-\frac{3n}4 +\frac n2=-\frac{n}4<0,
\end{aligned}
\end{equation}
we can take $c\in \mathrm{Lip}(\bar\Sigma)\cap C^\infty(\Sigma)$ such that
\begin{equation}\label{c-def}
\begin{cases}
\begin{aligned}
c&=-\frac{n(n+2)}{4}+\sigma_k\rho^2\quad\text{on }\{\rho<\rho_0\},\\
c&\leq -\frac{(n-2)^2}{4}\rho^2-\frac{n(n-1)}{4}\quad\text{on }\Sigma.
\end{aligned}
\end{cases}
\end{equation}

Step 1. Set $L_0=\rho^2\Delta_\theta+c$, we firstly solve 
\begin{equation}\label{tilde-xi-eq}
L_0\tilde{h}=\rho^2 F.
\end{equation}
By Lemma \ref{rho-property}, \eqref{equation-Yamabe-2}, and \eqref{c-def}, we have
$$
\rho\nabla_\theta\log \xi=\rho\frac{\nabla_\theta\xi}\xi=-\frac{n-2}2\nabla_\theta\rho\in L^\infty(\Sigma),
$$
and
\begin{align*}
L_0\xi&=\rho^2\Delta_\theta\xi+c\xi\\
&=\rho^2\Big(\frac{(n-2)^2}{4}\xi+\frac{n(n-2)}{4}\xi^{\frac{n+2}{n-2}}\Big)+c\xi\\
&=\xi\Big(\frac{(n-2)^2}{4}\rho^2+\frac{n(n-2)}{4}+c\Big)\leq -\frac n 4\xi.
\end{align*}
Now, we can take $\psi=\xi$ and $\delta=n/4$ in Theorem \ref{outer-problem-theorem}. Hence, we obtain an $\tilde{h}\in C^{\infty}(\Sigma)$ satisfying \eqref{tilde-xi-eq} with
\begin{equation}\label{tilde-xi-1-estimate}
\Big\|\frac{\tilde{h}}{\xi}\Big\|_{L^{\infty}(\Sigma)}\leq\frac1\delta \Big\|\frac{\rho^2 F}{\xi}\Big\|_{L^{\infty}(\Sigma)}\leq C,
\end{equation}
where we use \eqref{estimate-F}.

Step 2. Set $L_1=\Delta_\theta-\frac{n(n+2)}{4\rho^2}$, we secondly solve 
\begin{equation}\label{bar-xi-eq}
L_1\bar{h}+\sigma_k\bar{h}=G,
\end{equation}
where  
$$
G=\sigma\phi_1+\tilde{G},
$$
with 
$$
\tilde{G}=\tilde{h}\Big(\frac{c}{\rho^2}+\frac{n(n+2)}{4\rho^2}-\sigma_k\Big).
$$
By \eqref{c-def}, we have $\tilde{G}=0$ on $\{\rho<\rho_0\}$ and $\tilde{G}\in C^{\infty}(\bar\Sigma)$.
%Note that $-n(n-4)/4\leq0<\lambda_1$ for $n\geq 4$ and $-n(n-4)/4=3/4<\lambda_1$ for $n=3$ by Lemma \ref{inner-problem-lemma}(1) and Lemma \ref{inner-problem-lemma}(3). 

(i) If $\sigma_k< \lambda_1$, we take $\sigma=0$. 

(ii) If $\sigma_k=\lambda_1$, we take $\sigma=-(\tilde{G},\phi_1)_{L^2(\Sigma)}$, then $(G,\phi_1)_{L^2(\Sigma)}=0$. 

Hence, in both cases, we can apply Lemma \ref{inner-problem-lemma}(2) with $\lambda=\sigma_k$ and $f=G$ to find a $\bar h\in C^{\infty}(\Sigma)$ satisfying \eqref{bar-xi-eq} with
\begin{align*}
\|\bar h\|_{H^1_0(\Sigma)}\leq C\|G\|_{L^{2}(\Sigma)}\leq C\|\tilde{G}\|_{L^{2}(\Sigma)}\leq C.%\|\tilde{\xi}_1\|_{L^{\infty}(\Sigma)}.
\end{align*}
In addition, by Lemma \ref{inner-problem-lemma}(4), we have
\begin{equation}\label{bar-xi-1-estimate}
\|\bar h\|_{L^\infty(\Sigma)}\leq C(\|\bar h\|_{L^{2}(\Sigma)}+1)\leq C.%\|\tilde{\xi}_1\|_{L^{\infty}(\Sigma)}.
\end{equation}

Step 3. Take $h=\tilde h+\bar h$. By \eqref{tilde-xi-eq} and \eqref{bar-xi-eq}, we can verify that $h\in C^{\infty}(\Sigma)$ satisfies \eqref{xi1-equation}. In addition, by combining \eqref{tilde-xi-1-estimate} and \eqref{bar-xi-1-estimate}, we obtain \eqref{estimate-xi1}.
%\begin{equation}\label{xi-1-estimate}
%|\xi_1|\leq C\xi\quad\text{on }\Sigma.
%\end{equation}
Next, {we assume that $F$ satisfies \eqref{F-Holder}.} %is a linear combination of $\nabla_\theta \xi$ and $\nabla^2_\theta \xi$ and consider estimates of $\nabla_\theta h$ and $\nabla^2_\theta h$.
Recall $\xi=\rho^{-(n-2)/2}\sim d_{\Sigma}^{-(n-2)/2}$, and we note for any $z\in B_{d_{\Sigma}(x)/2}(x)$,
$$
\xi(z)\sim d^{-\frac{n-2}{2}}_{\Sigma}(z)\sim d^{-\frac{n-2}{2}}_{\Sigma}(x)\sim\xi(x).
$$
%By applying the scaled $C^{1,\alpha}$-estimate to \eqref{equation-Yamabe-2}, we have, for any $x\in\Sigma$ and $\beta\in(0,1)$,
%\begin{align*}
%&d^\beta_{\Sigma}(x)[\xi]_{C^\beta(B_{d_{\Sigma}(x)/4}(x))}+d_{\Sigma}(x)\|\nabla_\theta\xi\|_{L^\infty(B_{d_{\Sigma}(x)/4}(x))}+d^{1+\beta}_{\Sigma}(x)[\nabla_\theta\xi]_{C^\beta(B_{d_{\Sigma}(x)/4}(x))}\\
%&\qquad\leq C(\|\xi\|_{L^\infty(B_{d_{\Sigma}(x)/2}(x))}+d^2_{\Sigma}(x)\|\xi^{\frac{n+2}{n-2}}\|_{L^\infty(B_{d_{\Sigma}(x)/2}(x))})\\
%&\qquad\leq C\|\xi\|_{L^\infty(B_{d_{\Sigma}(x)/2}(x))}.
%\end{align*}
%Next, by the scaled $C^{2,\alpha}$-estimate, we also obtain
%\begin{align*}
%&d^2_{\Sigma}(x)\|\nabla^2_\theta\xi\|_{L^\infty(B_{d_{\Sigma}(x)/8}(x))}+d^{2+\beta}_{\Sigma}(x)[\nabla^2_\theta\xi]_{C^\beta(B_{d_{\Sigma}(x)/8}(x))}\\
%&\qquad\leq C(\|\xi\|_{L^\infty(B_{d_{\Sigma}(x)/4}(x))}+d^2_{\Sigma}(x)\|\xi^{\frac{n+2}{n-2}}\|_{L^\infty(B_{d_{\Sigma}(x)/4}(x))}+d^{2+\beta}_{\Sigma}(x)[\xi^{\frac{n+2}{n-2}}]_{C^\beta (B_{d_{\Sigma}(x)/4}(x))})\\
%&\qquad\leq C(\|\xi\|_{L^\infty(B_{d_{\Sigma}(x)/4}(x))}+d^{2+\beta}_{\Sigma}(x)[\xi]_{C^\beta(B_{d_{\Sigma}(x)/4}(x))}\|\xi^{\frac{4}{n-2}}\|_{L^\infty (B_{d_{\Sigma}(x)/4}(x))})\\
%&\qquad\leq C\|\xi\|_{L^\infty(B_{d_{\Sigma}(x)/2}(x))}.
%\end{align*}
%Finally, 
By applying the scaled $C^{2,\alpha}$-estimate to \eqref{xi1-equation} and using \eqref{estimate-F}-\eqref{F-Holder} and \eqref{eigenvalue-function-estimate}, we get
\begin{align*}
&{d_{\Sigma}(x)\|\nabla_\theta h\|_{L^\infty(B_{d_{\Sigma}(x)/8}(x))}+d^2_{\Sigma}(x)\|\nabla^2_\theta h\|_{L^\infty(B_{d_{\Sigma}(x)/8}(x))}+d_\Sigma^{2+\beta}(x)[\nabla^2_\theta h]_{C^{\beta}(B_{d_\Sigma(x)/8}(x))}}\\
&\qquad\leq C(\|h\|_{L^\infty(B_{d_{\Sigma}(x)/4}(x))}+\frac{d^2_{\Sigma}(x)}{d^2_{\Sigma}(x)}\|\rho^2(F+\sigma\phi_1)\|_{L^\infty(B_{d_{\Sigma}(x)/4}(x))}\\
&\qquad\qquad+\frac{d^{2+\beta}_{\Sigma}(x)}{d^{2}_{\Sigma}(x)}[\rho^2(F+\sigma\phi_1)]_{C^\beta (B_{d_{\Sigma}(x)/4}(x))})\\
&\qquad\leq C\|\xi\|_{L^\infty(B_{d_{\Sigma}(x)/2}(x))}.
\end{align*}
Therefore, we obtain \eqref{estimate-xi1-derivative}.
\end{proof}
{
\begin{remark}\label{premu1}
In particular, when $k=1$, we have $\sigma_1=-n(n-4)/4$. Note that $-n(n-4)/4\leq0<\lambda_1$ for $n\geq 4$ and $-n(n-4)/4=3/4<\lambda_1$ for $n=3$ by Lemma \ref{inner-problem-lemma}(1) and Lemma \ref{inner-problem-lemma}(3). 

We further observe that when $\sigma_k = \lambda_1$ for some integer $k>1$, the solution of equation \eqref{xi1-equation} is not unique. In fact, if $h$ is a solution of \eqref{xi1-equation}, then $h + t\phi_1$ is also a solution for any $t \in \mathbb{R}$. %This non-uniqueness essentially prevents the coefficients of terms of order higher than $\mu_1$ in the asymptotic expansion of local positive solutions to \eqref{equation-Yamabe-main}–\eqref{boundary-condition-main} from being determined formally.
\end{remark}}
Now, we begin to prove Theorem \ref{thm-main-2}.
\begin{proof}[Proof of Theorem \ref{thm-main-2}]
We adopt the notation from the proof of Theorem \ref{thm-main-1}. Recall that $v(y)=u(T^{-1}y)$. Set $r=|y|=|Tx|$. By Theorem \ref{thm-main-1}, there exist some constants $r_0>0$ and $B>0$ such that 
\begin{equation}\label{estimate-v-u_v}
|v(y)-u_V(y)|\leq Bru_V(y),\quad\text{for any }r\leq r_0.     
\end{equation}
Let $\lambda_1$ be the first eigenvalue of $L_1$ in \eqref{L1} and let $\phi_1\in H^1_0(\Sigma)\cap C^{\infty}(\Sigma)\cap C(\bar\Sigma)$ be the corresponding eigenfunction satisfying $\phi_1>0$ in $\Sigma$ with $\|\phi_1\|_{L^2(\Sigma)}=1$.  Note that the existence of such a function $\phi_1$ is guaranteed by Lemma \ref{inner-problem-lemma}, {and $|\phi_1|\leq C'\xi$ for some constant $C'>0$ depending only on $n$, $\lambda_1$, and $\Sigma$.}

\textbf{Case 1. $\mu_1>2.$ }

Let $A_0,A_1>0$ be constants and let $\xi_1\in C^\infty(\Sigma)$ be a function to be determined satisfying estimates of the form \eqref{estimate-xi1} and \eqref{estimate-xi1-derivative}, i.e., {for some $\beta\in(0,1)$,
\begin{equation}\label{scalexi}
|\xi_1|+d_\Sigma|\nabla_\theta\xi_1|+d_\Sigma^2|\nabla^2_\theta\xi_1|+d_\Sigma^{2+\beta}[\nabla^2_\theta \xi_1]_{C^{\beta}(B_{d_\Sigma(\cdot)/2}(\cdot))}\leq C_0\xi,
\end{equation}
for some constants $C_0>0$ depends only on $n$, $\beta$, $g$, $V$, and the derivatives of $T$ at $0$.
We set   
$$
w=u_V+\xi_1 r^{-\frac n2+2}+A_0u_V r^2+A_1\phi_1 r^{-\frac n 2+3}=u_{V}(1+R_1), 
$$
where
$$
R_1:=\frac{\xi_1}{\xi}r+A_0r^2+A_1\frac{\phi_1}{\xi}r^2.
$$
We will choose $A_0$, $A_1$, $\xi_1$, and $\bar{r}_1>0$ small enough so that
\begin{equation}\label{R-estimate}
C_0\bar r_1+A_0\bar r^2_1+A_1C'\bar r^2_1\leq \frac{1}{2},
\end{equation}
hence we have, for any $r\leq \bar r_1$,
\begin{equation}\label{R1-exp}
\left|(1+R_1)^{\frac{n+2}{n-2}}-1-\frac{n+2}{n-2}R_1\right|\leq C(n)R^2_1,
\end{equation}
where $C(n)>0$ depends only on $n$. 

We suppose \eqref{R-estimate} holds, then by \eqref{equation-Yamabe-2} and $L_1\phi_1+\lambda_1\phi_1=0$, a direct computation yields
\begin{equation}\label{LN-super}
\begin{aligned}
&\Delta w-\frac{1}{4}n(n-2)w^{\frac{n+2}{n-2}}\\
&=\Delta w-\frac{1}{4}n(n-2)u_V^{\frac{n+2}{n-2}}(1+R_1)^{\frac{n+2}{n-2}}\\
&\leq \Delta w-\frac{1}{4}n(n-2)u_V^{\frac{n+2}{n-2}}\Big[1+\frac{n+2}{n-2}R_1-C(n)R^2_1\Big]\\
%&=\Delta w-\frac{1}{4}n(n-2)u_V^{\frac{n+2}{n-2}}\Big(1+\frac{\xi_1}{\xi}r+A_0r^2+A_1\frac{\phi_1}{\xi}r^2\Big)^{\frac{n+2}{n-2}}\\
%&\leq \Delta w-\frac{1}{4}n(n-2)u_V^{\frac{n+2}{n-2}}\Big[1+\frac{n+2}{n-2}\Big(\frac{\xi_1}{\xi}r+A_0r^2+A_1\frac{\phi_1}{\xi}r^2\Big)\Big]\\
&=\Big(\Delta_\theta \xi_1-\frac{n(n+2)}{4\rho^2}\xi_1+\frac{n(4-n)}{4}\xi_1\Big)r^{-\frac{n}{2}}+A_0(4\xi-n\xi^{\frac{n+2}{n-2}})r^{-\frac{n}{2}+1}\\
&\qquad+A_1\Big(\Big(-\frac{n}{2}+3\Big)\Big(\frac{n}{2}+1\Big)-\lambda_1\Big)\phi_1 r^{-\frac{n}{2}+1}+\frac{1}{4}n(n-2)C(n)u_V^{\frac{n+2}{n-2}}R^2_1.
\end{aligned}
\end{equation}
By $T\in C^3$,} we have $a_{ij}\in C^2,b_i\in C^1$, and 
$$a_{ij}=\delta_{ij}+a_{ij,k}y_k+O(r^2),\mbox{ and }\ b_i=b_{i,0}+O(r).$$
%\begin{align*}
%a_{ij}&=\delta_{ij}+a_{ij,k}y_k+a_{ij,kl}y_ky_l+o(r^2)=\delta_{ij}+a_{ij,k}y_k+O(r^2),\\ b_i&=b_{i, 0}+b_{i,k}y_k+o(r)=b_{i,0}+O(r).
%\end{align*}
Hence, by \eqref{xi-estimate}, \eqref{xi-der-estimate}, \eqref{scalexi}, and \eqref{eigenvalue-function-estimate}, we get
\begin{equation}\label{Yamabe-super}
\begin{aligned}
&\L w-\frac{n(n-2)}{4}w^{\frac{n+2}{n-2}}\\
&=(\delta_{ij}+a_{ij,k}y_k+O(r^2)) w_{ij}+(b_{i,0}+O(r))w_i+cw-\frac{n(n-2)}{4}w^{\frac{n+2}{n-2}}\\
&=\Big(\Delta w-\frac{n(n-2)}{4}w^{\frac{n+2}{n-2}}\Big)+a_{ij,k}y_k w_{ij}+O(r^2)w_{ij}+b_{i,0}w_i+O(r)w_i+cw\\
&=\Big(\Delta w-\frac{n(n-2)}{4}w^{\frac{n+2}{n-2}}\Big)+a_{ij,k}y_k (u_{V})_{ij}+b_{i,0}(u_{V})_i+\xi^{\frac{n+2}{n-2}}O(r^{-\frac n2+1})\\
&\qquad+(A_0+A_1+1)\xi^{\frac{n+2}{n-2}}O(r^{-\frac n2+2}).
\end{aligned}
\end{equation}
Note that $F_1:=-r^{\frac n 2}(a_{ij,k}y_k (u_{V})_{ij}+b_{i,0}(u_{V})_i)\in C^\infty(\Sigma)$ is a linear combination of $\nabla_\theta \xi$ and $\nabla^2_\theta \xi$. It follows from \eqref{xi-der-estimate} that $F_1$ satisfies \eqref{estimate-F} and \eqref{F-Holder}. {By Remark \ref{premu1}, we can apply Lemma \ref{solve-xi1}(i) with $F=F_1$ and $k=1$ to obtain the desired $\xi_1$ satisfying \eqref{xi1-equation} with $\sigma=0$, and \eqref{scalexi}.} Therefore, {by substituting \eqref{LN-super} into \eqref{Yamabe-super}, we have}
\begin{align*}
&\L w-\frac{n(n-2)}{4}w^{\frac{n+2}{n-2}}\\
&\leq\Big(\Delta_\theta \xi_1-\frac{n(n+2)}{4\rho^2}\xi_1+\frac{n(4-n)}{4}\xi_1-F_1\Big)r^{-\frac{n}{2}}\\
&\qquad+\Big\{\xi^{\frac{n+2}{n-2}}O(1)+A_0(4\xi-n\xi^{\frac{n+2}{n-2}})+A_1\Big(\Big(-\frac{n}{2}+3\Big)\Big(\frac{n}{2}+1\Big)-\lambda_1\Big)\phi_1 \Big\}r^{-\frac n2+1}\\
&\qquad+(A_0+A_1+1)\xi^{\frac{n+2}{n-2}}O(r^{-\frac n2+2}).
\end{align*}
{where the first term on the right hand side vanishes.}

Step 1. Set $A_1=kA_0$. First, we have $|O(1)|\leq C_1$ for $r\leq r_0$. Then, we choose $A_0\geq\max\{2C_1/n,1\}$. We claim that by choosing $k>0$ large enough, we have
\begin{equation}\label{choose-1}
\xi^{\frac{n+2}{n-2}}O(1)+A_0(4\xi-n\xi^{\frac{n+2}{n-2}})+A_1\Big(\Big(-\frac{n}{2}+3\Big)\Big(\frac{n}{2}+1\Big)-\lambda_1\Big)\phi_1 \leq -\frac n4A_0\xi^{\frac{n+2}{n-2}}.
\end{equation}

By \eqref{boundary-condition-2}, we can take $\delta>0$ small enough such that $\xi^{\frac{4}{n-2}}>16/n$ on $\Sigma_\delta:=\{x\in\Sigma:d_{\Sigma}(x)<\delta\}$. Hence, we have 
$$4\xi-\frac n2\xi^{\frac{n+2}{n-2}}<-\frac n4\xi^{\frac{n+2}{n-2}}\quad\text{on }\Sigma_\delta.$$ Recall \eqref{mu1-def}, and note that $\mu_1>2$ implies $(-n/2+3)(n/2+1)-\lambda_1<0$. Hence, on $\Sigma_\delta$ the left-hand side of \eqref{choose-1} is controlled by
$$
C_1\xi^{\frac{n+2}{n-2}}+A_0(4\xi-n\xi^{\frac{n+2}{n-2}})\leq A_0\Big(\frac n2\xi^{\frac{n+2}{n-2}}+4\xi-n\xi^{\frac{n+2}{n-2}}\Big)\leq-\frac n4A_0\xi^{\frac{n+2}{n-2}}.
$$
On the other hand, we have
$$
\phi_1\geq c_\delta,\quad \xi\leq C_\delta\quad\text{on }\Sigma\backslash\Sigma_\delta,
$$
for some positive constants $c_\delta$ and $C_\delta$. Hence, on $\Sigma\backslash\Sigma_\delta$ the left-hand side of \eqref{choose-1} is controlled by
$$
\frac n2A_0C^{\frac{n+2}{n-2}}_\delta+4A_0C_\delta-nA_0\xi^{\frac{n+2}{n-2}}+A_0k\Big(\Big(-\frac{n}{2}+3\Big)\Big(\frac{n}{2}+1\Big)-\lambda_1\Big)c_\delta\leq  -nA_0\xi^{\frac{n+2}{n-2}},
$$
where we take %$k$ such that 
\begin{equation}\label{k-large}
k=\Big(\frac n2C^{\frac{n+2}{n-2}}_\delta+4C_\delta\Big)\Big/\Big(\Big(\frac{n}{2}-3\Big)\Big(\frac{n}{2}+1\Big)+\lambda_1\Big)c_\delta.
\end{equation}
In summary, by choosing $A_0\geq \max\{2C_1/n,1\}$ and $k$ as \eqref{k-large}, we obtain \eqref{choose-1}.

Step 2. There exists a constant $r'_0>0$ small enough such that 
\begin{equation}\label{choose-2}
\begin{aligned}
(A_0+A_1+1)O(r^{-\frac n2+2})&\leq (k+2)A_0r |O(r^{-\frac n2+1})|\\
&\leq \frac n8A_0r^{-\frac n2+1}\quad\text{for any }r\leq r'_0.\end{aligned}
\end{equation}

Step 3. By \eqref{scalexi}, we have 
\begin{equation}\label{boundary-infty}
|\xi_1 r^{-\frac n2+2}|\leq C_2\xi r^{-\frac n2+2}=C_2u_V r\leq \frac12u_V,
\end{equation}
for any $r\leq 1/(2C_2)$. 

{Recall $A_1=kA_0$ with $k$ as \eqref{k-large}, now we set 
$$
\bar r_1=\min\left\{1,r_0,r'_0,\frac1{2C_2},\frac1{100C_0},\frac{1}{4(1+B+C_2+2C_1/n)(1+kC')}\right\},
$$ and take 
$$A_0=\max\left\{\frac{2C_1}{n},1,\frac{B+C_2}{\bar r_1}%\frac{1+kC'}{3\bar r^2_1}
\right\}.$$ Hence, we have \eqref{R-estimate}.} By the construction of $\xi_1$, \eqref{choose-1}, and \eqref{choose-2}, we have for any $r\leq \bar r_1$,
$$
\L w-\frac{n(n-2)}{4}w^{\frac{n+2}{n-2}}\leq -\frac n8A_0\xi^{\frac{n+2}{n-2}}r^{-\frac n2+1}<0.
$$
In addition, by \eqref{boundary-infty}, we have $w=+\infty$ on $\partial V\cap \{r\leq \bar r_1\}$. On the other hand, by \eqref{estimate-v-u_v}, \eqref{boundary-infty}, and the choice of $A_0$, we get, on $\{r=\bar r_1\}$,
$$
w-v\geq -B u_V \bar r_1- C_2u_V\bar r_1+A_0\bar r_1u_V\bar r_1>0.
$$
Therefore, similar to the proof of Theorem \ref{thm-main-1}, by applying the maximum principle and taking $\bar r_1$ smaller, we can prove 
$$
v\leq w=u_V+\xi_1 r^{-\frac n2+2}+A_0u_V r^2+A_1\phi_1 r^{-\frac n 2+3}\leq u_V+c_1 u_Vr +Cu_V d_g^2(x,0),
$$
for any $x\in \Omega\cap B_{r_1}(0)$, where $c_1=\xi_1/\xi\in C^{\infty}(\Sigma)\cap L^{\infty}(\Sigma)$ and $r_1=\bar r_1/\|T\|_{C^2}$.
Set $\bar w=u_V+\xi_1 r^{-\frac n2+2}-A_0u_V r^2-A_1\phi_1 r^{-\frac n 2+3}$. A similar discussion yields
$$
v\geq \bar w\geq u_V+c_1 u_Vr -Cu_V d_g^2(x,0),
$$
for any $x\in \Omega\cap B_{r_1}(0)$, by taking $r_1$ smaller. %By Lemma \ref{dg-|y|}, we know $r=|Tx|=d_g(x, 0)+O(d_g^2(x,0))$, then we finally get \eqref{main-2}.

\textbf{Case 2. $\mu_1=2$. }

For some constants $A_0,A_1>0$ and some function $\xi_1\in C^\infty(\Sigma)$ {that satisfies \eqref{scalexi} to be determined, we set   
$$
w=u_V+\xi_1 r^{-\frac n2+2}+A_0u_V r^2-A_1\phi_1 r^{-\frac n 2+3}\log r=u_V(1+R_1), 
$$
where 
$$
R_1:=\frac{\xi_1}{\xi}r+A_0r^2-A_1\frac{\phi_1}{\xi}r^2\log r.
$$
We will choose $A_0$, $A_1$, $\xi_1$, and $\bar r_1\in(0,1/2)$ small enough so that
$$
C_0\bar r_1+A_0\bar r^2_1-A_1C'\bar r^2_1\log \bar r_1\leq \frac{1}{2},
$$
then, for any $r\leq \bar r_1$, \eqref{R1-exp} holds for such $R_1$.

By a computation similar to Case 1, we have
\begin{align*}
&\Delta w-\frac{1}{4}n(n-2)w^{\frac{n+2}{n-2}}\\
&=\Delta w-\frac{1}{4}n(n-2)u_V^{\frac{n+2}{n-2}}(1+R_1)^{\frac{n+2}{n-2}}\\
&\leq \Delta w-\frac{1}{4}n(n-2)u_V^{\frac{n+2}{n-2}}\Big[1+\frac{n+2}{n-2}R_1-C(n)R^2_1\Big]\\
%&=\Delta w-\frac{1}{4}n(n-2)u_V^{\frac{n+2}{n-2}}\Big(1+\frac{\xi_1}{\xi}r+A_0r^2-A_1\frac{\phi_1}{\xi}r^2\log r\Big)^{\frac{n+2}{n-2}}\\
%&\leq \Delta w-\frac{1}{4}n(n-2)u_V^{\frac{n+2}{n-2}}\Big[1+\frac{n+2}{n-2}\Big(\frac{\xi_1}{\xi}r+A_0r^2-A_1\frac{\phi_1}{\xi}r^2\log r\Big)\Big]\\
&=\Big(\Delta_\theta \xi_1-\frac{n(n+2)}{4\rho^2}\xi_1+\frac{n(4-n)}{4}\xi_1\Big)r^{-\frac{n}{2}}\\
&\qquad-A_1\Big(\Big(-\frac{n}{2}+3\Big)\Big(\frac{n}{2}+1\Big)-\lambda_1\Big)\phi_1 r^{-\frac{n}{2}+1}\log r\\
&\qquad+A_0(4\xi-n\xi^{\frac{n+2}{n-2}})r^{-\frac{n}{2}+1}-4A_1\phi_1r^{-\frac n2+1}+\frac{1}{4}n(n-2)C(n)u_V^{\frac{n+2}{n-2}}R^2_1.
\end{align*}}
Note that $\mu_1=2$ implies that $(-n/2+3)(n/2+1)-\lambda_1=0$.
Hence, we have
\begin{align*}
&\L w-\frac{n(n-2)}{4}w^{\frac{n+2}{n-2}}\\
&=(\delta_{ij}+a_{ij,k}y_k+O(r^2)) w_{ij}+(b_{i,0}+O(r))w_i+cw\\
&=\Big(\Delta w-\frac{n(n-2)}{4}w^{\frac{n+2}{n-2}}\Big)+a_{ij,k}y_k w_{ij}+O(r^2)w_{ij}+b_{i,0}w_i+O(r)w_i+cw\\
&=\Big(\Delta w-\frac{n(n-2)}{4}w^{\frac{n+2}{n-2}}\Big)+a_{ij,k}y_k (u_{V})_{ij}+b_{i,0}(u_{V})_i\\
&\qquad+\xi^{\frac{n+2}{n-2}}O(r^{-\frac n2+1})+(A_0+A_1+1)\xi^{\frac{n+2}{n-2}}O(r^{-\frac n2+2}\log r)\\
&\leq\Big(\Delta_\theta \xi_1-\frac{n(n+2)}{4\rho^2}\xi_1+\frac{n(4-n)}{4}\xi_1-F_1\Big)r^{-\frac{n}{2}}\\
&\qquad+\big\{\xi^{\frac{n+2}{n-2}}O(1)+A_0(4\xi-n\xi^{\frac{n+2}{n-2}}) -4A_1\phi_1\big\}r^{-\frac n2+1}\\
&\qquad+(A_0+A_1+1)\xi^{\frac{n+2}{n-2}}O(r^{-\frac n2+2}\log r),
\end{align*}
{where $F_1$ is the same as that in Case 1.}
Proceeding as in Case 1, we can choose $\xi_1$, $A_0$, $A_1$, and $r_1$ such that
$$
v\leq w\leq u_V+c_1 u_Vr -Cu_V d_g^2(x,0)\log d_g(x,0),
$$
for any $x\in \Omega\cap B_{r_1}(0)$, where $c_1=\xi_1/\xi\in C^{\infty}(\Sigma)\cap L^{\infty}(\Sigma)$.
Set $\bar w=u_V+\xi_1 r^{-\frac n2+2}-A_0u_V r^2+A_1\phi_1 r^{-\frac n 2+3}\log r$. A similar discussion yields
$$
v\geq \bar w\geq u_V+c_1 u_Vr+Cu_V d_g^2(x,0)\log d_g(x,0),
$$
for any $x\in \Omega\cap B_{r_1}(0)$, by taking $r_1$ smaller. The above two inequalities imply the second part of \eqref{main-2}. 

Next, we improve this estimate to \eqref{main-2-3} if $T\in C^{3,\gamma}$, for some $\gamma\in(0,1)$. %Since $T\in C^{3, \gamma}$ with any $\gamma\in(0, \alpha)\subset(0,1)$, we only need to consider the case $\alpha\in (0, 1)$.
Note in this case, $a_{ij}\in C^{2,\gamma},b_i\in C^{1,\gamma}$, and we know
$$a_{ij}=\delta_{ij}+a_{ij,k}y_k+a_{ij,kl}y_ky_l+O(r^{2+\gamma}),\mbox{ and } b_i=b_{i, 0}+b_{i,k}y_k+O(r^{1+\gamma}).$$
 Let $A_0, A_1>0$ be constants and let $\xi_1, \xi_2,\xi_3\in C^\infty(\Sigma)$ be functions to be determined that satisfy estimates of the form \eqref{estimate-xi1} and \eqref{estimate-xi1-derivative}, i.e., for some $\beta\in (0,1)$,
\begin{equation}\label{scalexi_i}
|\xi_i|+d_\Sigma|\nabla_\theta\xi_i|+d_\Sigma^2|\nabla^2_\theta\xi_i|+d_\Sigma^{2+\beta}[\nabla^2_\theta \xi_i]_{C^{\beta}(B_{d_\Sigma(\cdot)/2}(\cdot))}\leq C_0\xi,\quad i=1,2,3,
\end{equation}
for some constants $C_0>0$ depends only on $n$, $\beta$, $g$, $V$, and the derivatives of $T$ at $0$.
%For some constants $A_0,A_1,\epsilon>0$ and some functions $\xi_1,\xi_2, \xi_3\in C^\infty(\Sigma)$ that satisfies \eqref{estimate-xi1} and \eqref{estimate-xi1-derivative} to be determined, 
We set   
\begin{align*}
w&=u_V+\xi_1 r^{-\frac n2+2}+\xi_2 r^{-\frac n2+3}\log r +\xi_3 r^{-\frac n2+3}+A_0u_V r^{2+\gamma}+A_1\phi_1r^{-\frac{n}{2}+1}(r^{2}-r^{2+\gamma})\\&=u_V(1+R_2), 
\end{align*}
where 
$$
R_2:=\frac{\xi_1}{\xi}r+\frac{\xi_2}{\xi}r^2\log r+\frac{\xi_3}{\xi}r^2+A_0r^{2+\gamma}+A_1\frac{\phi_1}{\xi}(r^2-r^{2+\gamma}).
$$
We will choose $A_0$, $A_1$, $\xi_i(i=1,2,3)$, and $\bar r_1\in(0,1/2)$ small enough so that
$$
C_0(\bar r_1-\bar r_1^2\log \bar r_1+\bar r^2_1)+A_0 \bar r^{2+\gamma}_1+A_1 C'\bar r_1^2\leq \frac12,
$$
hence we have, for any $r\leq \bar r_1$, 
$$
\left|(1+R_2)^{\frac{n+2}{n-2}}-1-\frac{n+2}{n-2}R_2-\frac{2(n+2)}{(n-2)^2}R^2_2\right|\leq C(n)|R_2|^3,
$$
where $C(n)>0$ depends only on $n$. 
Note that $\mu_1=2$ implies that $(-n/2+3)(n/2+1)-\lambda_1=0,$ then by $L_1\phi_1+\lambda_1\phi_1=0$, we have
$$
\Delta(\phi_1 r^{-\frac{n}{2}+3})-\frac{n(n+2)}{4\rho^2}r^{-\frac{n}{2}+1}\phi_1=0.
$$ A computation similar to Case 1 yields
\begin{align*}
&\Delta w-\frac{1}{4}n(n-2)w^{\frac{n+2}{n-2}}\\
&=\Delta w-\frac{1}{4}n(n-2)u_V^{\frac{n+2}{n-2}}(1+R_2)^{\frac{n+2}{n-2}}\\
&\leq \Delta w-\frac{1}{4}n(n-2)u_V^{\frac{n+2}{n-2}}\Big[1+\frac{n+2}{n-2}R_2+\frac{2(n+2)}{(n-2)^2}R^2_2-C(n)|R_2|^3\Big]\\
%&=\Delta w-\frac{1}{4}n(n-2)u_V^{\frac{n+2}{n-2}}\Big(1+\frac{\xi_1}{\xi}r+\frac{\xi_2}{\xi}r^2\log r+\frac{\xi_3}{\xi}r^2+A_0r^{2+\alpha}-A_1\frac{\phi_1}{\xi}r^{2+\alpha}\Big)^{\frac{n+2}{n-2}}\\
%&\leq \Delta w-\frac{1}{4}n(n-2)u_V^{\frac{n+2}{n-2}}\Big[1+\frac{n+2}{n-2}\Big(\frac{\xi_1}{\xi}r+\frac{\xi_2}{\xi}r^2\log r+\frac{\xi_3}{\xi}r^2+A_0r^{2+\alpha}-A_1\frac{\phi_1}{\xi}r^{2+\alpha}\Big)\Big]\\
&=\Big(\Delta_\theta \xi_1-\frac{n(n+2)}{4\rho^2}\xi_1+\frac{n(4-n)}{4}\xi_1\Big)r^{-\frac{n}{2}}\\
&\qquad+\Big(\Delta_\theta \xi_2-\frac{n(n+2)}{4\rho^2}\xi_2+\left(-\frac{n}{2}+3 \right)\left(\frac{n}{2}+1\right)\xi_2\Big)r^{-\frac{n}{2}+1}\log r\\
&\qquad+\Big(\Delta_\theta \xi_3-\frac{n(n+2)}{4\rho^2}\xi_3+\left(-\frac{n}{2}+3\right)\left(\frac{n}{2}+1\right)\xi_3+4\xi_2-\frac{n(n+2)}{2(n-2)}\xi^{\frac{n+2}{n-2}}\Big(\frac{\xi_1}{\xi}\Big)^2\Big)r^{-\frac{n}{2}+1}\\
&\qquad+A_0\Big((2+\gamma)^2\xi-n\xi^{\frac{n+2}{n-2}}\Big)r^{-\frac{n}{2}+1+\gamma}-A_1\Big((2+\gamma)^2-2^2\Big)\phi_1r^{-\frac{n}{2}+1+\gamma}\\
&\qquad+(A_0+A_1+1)\xi^{\frac{n+2}{n-2}}O(r^{-\frac{n}{2}+2}).
%&\qquad+\frac{1}{4}n(n-2)C(n)u_V^{\frac{n+2}{n-2}}|R_2|^3+O.
\end{align*}
Hence, we have
\begin{align*}
&\L w-\frac{n(n-2)}{4}w^{\frac{n+2}{n-2}}\\
%&=(\delta_{ij}+a_{ij,k}y_k+O(r^2)) w_{ij}+(b_{i,0}+O(r))w_i+cw\\
&=\Big(\Delta w-\frac{n(n-2)}{4}w^{\frac{n+2}{n-2}}\Big)+a_{ij,k}y_k w_{ij}+a_{ij,kl}y_ky_l w_{ij}+O(r^{2+\gamma})w_{ij}\\ &\qquad+b_{i,0}w_i+b_{i,k}y_kw_i+O(r^{1+\gamma})w_i+cw\\
&=\Big(\Delta w-\frac{n(n-2)}{4}w^{\frac{n+2}{n-2}}\Big)+a_{ij,k}y_k (u_{V})_{ij}+b_{i,0}(u_{V})_i\\&\qquad+a_{ij,kl}y_ky_l (u_V)_{ij}+b_{i,k}y_k(u_V)_i+cu_V+a_{ij,k}y_k(\xi_1r^{-\frac{n}{2}+2})_{ij}+b_{i,0}(\xi_1r^{-\frac{n}{2}+2})_{i}\\
&\qquad+O(r^{2+\gamma})(u_V)_{ij}+O(r^{1+\gamma})(u_V)_i+\xi^{\frac{n+2}{n-2}}O(r^{-\frac n2+2}\log r)\\&\qquad+(A_0+A_1+1)\xi^{\frac{n+2}{n-2}}O(r^{-\frac{n}{2}+2})\\
&\leq \Big(\Delta_\theta \xi_1-\frac{n(n+2)}{4\rho^2}\xi_1+\frac{n(4-n)}{4}\xi_1-F_1\Big)r^{-\frac{n}{2}}\\
&\qquad+\Big(\Delta_\theta \xi_2-\frac{n(n+2)}{4\rho^2}\xi_2+\left(-\frac{n}{2}+3 \right)\left(\frac{n}{2}+1\right)\xi_2\Big)r^{-\frac{n}{2}+1}\log r\\
&\qquad+\Big(\Delta_\theta \xi_3-\frac{n(n+2)}{4\rho^2}\xi_3+\left(-\frac{n}{2}+3 \right)\left(\frac{n}{2}+1\right)\xi_3+4\xi_2-F_2\Big)r^{-\frac{n}{2}+1}\\
&\qquad+\Big(\xi^{\frac{n+2}{n-2}}O(1)+A_0((2+\gamma)^2\xi-n\xi^{\frac{n+2}{n-2}})\Big)r^{-\frac n2+1+\gamma}-A_1\Big((2+\gamma)^2-2^2\Big)\phi_1r^{-\frac{n}{2}+1+\gamma}\\
&\qquad+(A_0+A_1+1)\xi^{\frac{n+2}{n-2}}O(r^{-\frac{n}{2}+2}),
\end{align*}
where \begin{align*}
F_1&=-r^{\frac n2}\Big(a_{ij,k}y_k (u_{V})_{ij}+b_{i,0}(u_{V})_i\Big),\\
F_2&=-r^{\frac n2-1}\Big\{a_{ij,kl}y_ky_l (u_V)_{ij}+b_{i,k}y_k(u_V)_i+cu_V\\
&\qquad+a_{ij,k}y_k(\xi_1r^{-\frac{n}{2}+2})_{ij}+b_{i,0}(\xi_1r^{-\frac{n}{2}+2})_{i}\Big\}+{\frac{n(n+2)}{2(n-2)}\xi^{\frac{n+2}{n-2}}\Big(\frac{\xi_1}{\xi}\Big)^2}.\end{align*}
First, by the same reasoning as in Case 1, we can find a $\xi_1\in C^\infty(\Sigma)$ such that 
\begin{align*}
    \Delta_\theta \xi_1-\frac{n(n+2)}{4\rho^2}\xi_1+\frac{n(4-n)}{4}\xi_1-F_1&=0.
\end{align*} 
Then we choose $\xi_2=-\frac{\sigma}{4} \phi_1 $, where $\sigma$ comes from Lemma \ref{solve-xi1}(ii) with $F=F_2$ and $k=2$ in \eqref{xi1-equation}. Finally, by Lemma \ref{solve-xi1}(ii) with $F=F_2$ and $k=2$, we can find $\xi_3\in C^\infty(\Sigma)$ such that 
\begin{align*}
    \Delta_\theta \xi_3-\frac{n(n+2)}{4\rho^2}\xi_3+\left(-\frac{n}{2}+3 \right)\left(\frac{n}{2}+1\right)\xi_3+4\xi_2-F_2&=0.
\end{align*}
By Lemma \ref{solve-xi1} and Lemma \ref{inner-problem-lemma}(4), we know that $\xi_i$ satisfies \eqref{scalexi_i}, $i=1,2,3$. Proceeding as in Case 1, we can choose $A_0$, $A_1$, and $r_1$ such that
$$
v\leq w\leq u_V+c_1 u_Vr+c_2u_V r^{2}\log r +Cu_V d^2_g(x, 0),
$$
for any $x\in \Omega\cap B_{r_1}(0)$, where $c_1=\xi_1/\xi$ and $c_2=\xi_2/\xi$ are both in $C^{\infty}(\Sigma)\cap L^{\infty}(\Sigma)$.
{Set $\bar w=u_V+\xi_1 r^{-\frac n2+2}+\xi_2 r^{-\frac n2+3}\log r +\xi_3 r^{-\frac n2+3}-A_0u_V r^{2+\gamma}-A_1\phi_1 r^{-\frac n 2+1}(r^2-r^{2+\gamma})$,} a similar discussion yields
$$
v\geq \bar w\geq u_V+c_1 u_Vr+c_2u_V r^{2}\log r -Cu_V d^2_g(x, 0),
$$
for any $x\in \Omega\cap B_{r_1}(0)$, by taking $r_1$ smaller.

\textbf{Case 3. $\mu_1<2$. }

For some constants $A_0,A_1>0$ and some function $\xi_1\in C^\infty(\Sigma)$ satisfying \eqref{scalexi} to be determined, we set{   
$$
w=u_V+\xi_1 r^{-\frac n2+2}+A_0u_V r^{2}+A_1\phi_1(r^{\mu_1-\frac{n-2}{2}}-r^{-\frac n 2+3})=u_V(1+R_1),
$$
where
$$
R_1:=\frac{\xi_1}{\xi}r+A_0r^2+A_1\frac{\phi_1}{\xi}(r^{\mu_1}-r^2).
$$
We will choose $A_0$, $A_1$, $\xi_1$, and $\bar r_1>0$ small enough so that
$$
C_0\bar r_1+A_0\bar r^2_1+A_1C'\bar r^{\mu_1}_1\leq\frac12,
$$
then, for any $r\leq \bar r_1$, \eqref{R1-exp} holds for such $R_1$.
}By $L_1\phi_1+\lambda_1\phi_1=0$ and \eqref{mu1-def}, we have
$$
\Delta(\phi_1 r^{\mu_1-\frac{n-2}{2}})-\frac{n(n+2)}{4\rho^2}r^{\mu_1-\frac{n+2}{2}}\phi_1=0.$$

By a computation similar to Case 1, we get
\begin{align*}
&\Delta w-\frac{1}{4}n(n-2)w^{\frac{n+2}{n-2}}\\
&=\Delta w-\frac{1}{4}n(n-2)u_V^{\frac{n+2}{n-2}}(1+R_1)^{\frac{n+2}{n-2}}\\
&\leq \Delta w-\frac{1}{4}n(n-2)u_V^{\frac{n+2}{n-2}}\Big[1+\frac{n+2}{n-2}R_1-C(n)R^2_1\Big]\\
%&=\Delta w-\frac{1}{4}n(n-2)u_V^{\frac{n+2}{n-2}}\Big(1+\frac{\xi_1}{\xi}r+A_0r^2+A_1\frac{\phi_1}{\xi}(r^{\mu_1}-r^2)\Big)^{\frac{n+2}{n-2}}\\
%&\leq \Delta w-\frac{1}{4}n(n-2)u_V^{\frac{n+2}{n-2}}\Big[1+\frac{n+2}{n-2}\Big(\frac{\xi_1}{\xi}r+A_0r^2+A_1\frac{\phi_1}{\xi}(r^{\mu_1}-r^2)\Big)\Big]\\
&=\Big(\Delta_\theta \xi_1-\frac{n(n+2)}{4\rho^2}\xi_1+\frac{n(4-n)}{4}\xi_1\Big)r^{-\frac{n}{2}}+A_0(4\xi-n\xi^{\frac{n+2}{n-2}})r^{-\frac{n}{2}+1}\\
&\qquad-A_1\Big(\Big(-\frac{n}{2}+3\Big)\Big(\frac{n}{2}+1\Big)-\lambda_1\Big)\phi_1 r^{-\frac{n}{2}+1}+\frac{1}{4}n(n-2)C(n)u_V^{\frac{n+2}{n-2}}R^2_1.
\end{align*}
Note that $\mu_1<2$ implies that $(-n/2+3)(n/2+1)-\lambda_1>0$. By \eqref{mu1-def}, Lemma \ref{inner-problem-lemma}(1), and Lemma \ref{inner-problem-lemma}(3), we have $\mu_1>1$ for any $n\geq 3$. Hence, we obtain
\begin{align*}
&\L w-\frac{n(n-2)}{4}w^{\frac{n+2}{n-2}}\\
%&=(\delta_{ij}+a_{ij,k}y_k+O(r^2)) w_{ij}+(b_{i,0}+O(r))w_i+cw\\
&=\Big(\Delta w-\frac{n(n-2)}{4}w^{\frac{n+2}{n-2}}\Big)+a_{ij,k}y_k w_{ij}+O(r^2)w_{ij}+b_{i,0}w_i+O(r)w_i+cw\\
&=\Big(\Delta w-\frac{n(n-2)}{4}w^{\frac{n+2}{n-2}}\Big)+a_{ij,k}y_k (u_{V})_{ij}+b_{i,0}(u_{V})_i\\
&\qquad+\xi^{\frac{n+2}{n-2}}O(r^{-\frac n2+1})+(A_0+A_1+1)\xi^{\frac{n+2}{n-2}}O(r^{-\frac n2+\mu_1})\\
&\leq\Big(\Delta_\theta \xi_1-\frac{n(n+2)}{4\rho^2}\xi_1+\frac{n(4-n)}{4}\xi_1-F_1\Big)r^{-\frac{n}{2}}\\
&\qquad+\big\{\xi^{\frac{n+2}{n-2}}O(1)+A_0(4\xi-n\xi^{\frac{n+2}{n-2}})-A_1\Big(\Big(-\frac{n}{2}+3\Big)\Big(\frac{n}{2}+1\Big)-\lambda_1\Big)\phi_1 \big\}r^{-\frac n2+1}\\
&\qquad+(A_0+A_1+1)\xi^{\frac{n+2}{n-2}}O(r^{-\frac n2+\mu_1}),
\end{align*}
{where $F_1$ is the same as in Case 1.}
Proceeding as in Case 1, we can choose $\xi_1$, $A_0$, $A_1$, and $r_1$ such that
$$
v\leq w\leq u_V+c_1 u_Vr +Cu_V d_g^{\mu_1}(x,0),
$$
for any $x\in \Omega\cap B_{r_1}(0)$, where $c_1=\xi_1/\xi\in C^{\infty}(\Sigma)\cap L^{\infty}(\Sigma)$.
Set $\bar w=u_V+\xi_1 r^{-\frac n2+2}-A_0u_V r^2-A_1\phi_1 (r^{\mu_1-\frac{n-2}{2}}-r^{-\frac n 2+3})$. A similar discussion yields
$$
v\geq \bar w\geq u_V+c_1 u_Vr-Cu_V d_g^{\mu_1}(x,0),
$$
for any $x\in \Omega\cap B_{r_1}(0)$, by taking $r_1$ smaller.
\end{proof}
\begin{remark}
    When $T$ is only $C^{2,\alpha}$ with some $\alpha\in(0, 1]$, we can still obtain the first order expansion of $u/(u_{V}\circ T)$. The proof strategies are similar. In fact, we can construct the related function $w$ as follows.
    \begin{enumerate}
        \item $w=u_V+\xi_1 r^{-\frac n2+2}+A_0u_V r^{1+\alpha}+A_1\phi_1 r^{-\frac n 2+2+\alpha}$,\mbox{ if } $\mu_1>1+\alpha$;
        \item $w=u_V+\xi_1 r^{-\frac n2+2}+A_0u_V r^{1+\alpha}-A_1\phi_1 r^{-\frac n 2+2+\alpha}\log r$,\mbox{ if } $\mu_1=1+\alpha$;
        \item $w=u_V+\xi_1 r^{-\frac n2+2}+A_0u_V r^{1+\alpha}+A_1\phi_1 (r^{\mu_1-\frac{n-2}{2}}-r^{-\frac n 2+2+\alpha})$, \mbox{ if } $\mu_1<1+\alpha$.
    \end{enumerate}
A similar analysis, combined with the inductive argument, leads to the result in Remark \ref{tmalpha}.
\end{remark}
\section{Examples with no tangential decay}\label{nontanexample}
In this section, we will introduce some examples in which the tangential decay is absent. Specifically, we will demonstrate some examples, where the estimate in Theorem \ref{thm-main-1} can not be improved as the following:
\begin{equation}\label{nontan}
    \left|\frac{u(x)}{u_V(Tx)}-1\right|\leq Cd_g(x, 0)o(1),
\end{equation}
for $x\in\Omega\cap B_{r_0}(0)$ and $\theta=Tx/{|Tx|}$, where $o(1)\to 0$ as $\rho(\theta)\to 0$ and $\rho\sim d_\Sigma$ in $\Sigma$.
This constitutes the key difference from the results in \cite{HJS}. In \cite{HJS}, where the domains are cones, the aforementioned estimate holds well and is instrumental in deriving higher-order expansions.

We begin by reviewing solutions of the Loewner-Nirenberg problem \eqref{equation-LN-1}-\eqref{boundary-condition-LN} in cones with translational invariance. As a preparatory step for our first example, we focus on the specific scenario with $n=3$, and the invariance is along the $z$-axis.

\begin{proposition}\label{examprop}
    Let $V_\alpha$ be a wedge in $\R^2$ with the origin as its vertex and with the opening angle $\alpha\in(0, \pi)$. Assume that $\partial V_\alpha=\{\tau=0\}\cup\{\tau=\alpha\}$, where $\tau\in(0, \pi)$ is defined by $(\cos\tau, \sin\tau)=\frac{1}{\sqrt{x^2+y^2}}(x,y)$. Suppose $V=V_\alpha\times\R:=\{(x, y, z): (x, y)\in V_\alpha,z\in\mathbb R\}\subset\R^3$, and $u_V(x, y, z)$ is the positive solution to the Loewner-Nirenberg problem \eqref{equation-LN-1}-\eqref{boundary-condition-LN} in $V$. Then
    $$u_V(x, y, z)=|\sqrt{x^2+y^2}|^{-\frac{1}{2}}\eta(\tau),$$
    where $\tau\in(0,\alpha)$, and $\eta\in C^\infty(0, \alpha)$ satisfies  
    \begin{equation}\label{examg}
\left\{
   \begin{aligned}
       &\eta''(\tau)+\frac{1}{4}\eta(\tau)=\frac{3}{4}\eta^5(\tau) \quad \mbox{ in } (0, \alpha), \\
       &\eta(0)=+\infty,\quad  \eta(\alpha)=+\infty.
   \end{aligned}
\right.
\end{equation}
\end{proposition}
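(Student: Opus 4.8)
The plan is to reduce the statement to a separation-of-variables argument exploiting the translational invariance of $V=V_\alpha\times\R$ along the $z$-axis. First I would observe that the infinite cone $V$ is invariant under translations $(x,y,z)\mapsto(x,y,z+t)$, so that for any solution $u_V$ of \eqref{equation-LN-1}–\eqref{boundary-condition-LN} in $V$, the translated function $u_V(x,y,z+t)$ is again such a solution. By the uniqueness of the positive solution to the Loewner–Nirenberg problem in $V$ (as recalled in the introduction, using that $\Sigma=V\cap\S^{n-1}$ is Lipschitz), it follows that $u_V$ is independent of $z$, i.e.\ $u_V=u_V(x,y)$. Thus $u_V$ solves the two-dimensional problem $\partial_{xx}^2u_V+\partial_{yy}^2u_V=\tfrac34 u_V^{5}$ in $V_\alpha$ with $u_V=+\infty$ on $\partial V_\alpha$ (the exponent $(n+2)/(n-2)=5$ and constant $n(n-2)/4=3/4$ at $n=3$).

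Next I would introduce polar coordinates $(s,\theta)$ in the $(x,y)$-plane with $s=\sqrt{x^2+y^2}$ and $\theta\in(0,\alpha)$, so the domain becomes $\{(s,\theta):s>0,\ 0<\theta<\alpha\}$. The natural scaling of the equation suggests the ansatz $u_V=s^{-1/2}\eta(\theta)$: indeed the Loewner–Nirenberg equation in $\R^2$ is invariant under $u(x)\mapsto\lambda^{1/2}u(\lambda x)$ at $n=3$ (more precisely $u\mapsto \lambda^{(n-2)/2}u(\lambda x)$ with $n$ playing the role of the ambient dimension $3$, giving the exponent $-\tfrac12$ in $s$), and $V_\alpha$ is a cone, so by uniqueness again the solution must be scale-covariant: $u_V(\lambda x,\lambda y)=\lambda^{-1/2}u_V(x,y)$. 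Combining invariance under rotation-free dilations with the polar form forces $u_V=s^{-1/2}\eta(\theta)$ for some function $\eta$ of $\theta$ alone. Plugging this into the Laplacian in polar coordinates, $\Delta=\partial_{ss}^2+s^{-1}\partial_s+s^{-2}\partial_{\theta\theta}^2$, and using $\partial_{ss}^2(s^{-1/2})+s^{-1}\partial_s(s^{-1/2})=(\tfrac34-\tfrac12)s^{-5/2}=\tfrac14 s^{-5/2}$, I get $s^{-5/2}\big(\tfrac14\eta+\eta''\big)=\tfrac34 s^{-5/2}\eta^{5}$, which is precisely \eqref{examg}. The boundary condition $u_V=+\infty$ on $\{\theta=0\}\cup\{\theta=\alpha\}$ translates directly to $\eta(0)=\eta(\alpha)=+\infty$, and interior elliptic regularity (bootstrapping the ODE) gives $\eta\in C^\infty(0,\alpha)$ (in fact $\eta$ is real-analytic).

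The main obstacle, and the point that needs care rather than the formal computation, is justifying that $u_V$ really is both $z$-independent and scale-covariant; both rest on \emph{uniqueness} of the positive solution to the Loewner–Nirenberg problem on the unbounded Lipschitz cone $V$. This is where I would be most careful: one must invoke the uniqueness statement cited in the introduction (the existence and uniqueness of $u_V$ for a cone $V$ over a Lipschitz spherical domain), and check that it applies to $V=V_\alpha\times\R$, whose cross-section $\Sigma=V\cap\S^{2}$ is a Lipschitz domain on $\S^2$ (a spherical ``bigon'' of angular width $\alpha$, which is Lipschitz since $\alpha\in(0,\pi)$). Once uniqueness is in hand, the symmetry reductions follow by the standard argument: apply the symmetry to the solution, note the image is still a solution with the same blow-up boundary data, and conclude the two agree. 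Alternatively, if one prefers to avoid invoking uniqueness for the $z$-direction, one can instead directly construct the solution of \eqref{examg} by ODE shooting / sub- and super-solution methods and verify that $s^{-1/2}\eta(\theta)$ solves the PDE and has the correct boundary behavior, then appeal to uniqueness only once to identify it with $u_V$; I would mention this as the cleaner route. A minor technical point to address is the precise sense of the boundary condition $\eta(0^+)=\eta(\alpha^-)=+\infty$ and the matching with the estimate \eqref{LN-0order} near $\partial V_\alpha$, which pins down the blow-up rate $\eta(\theta)\sim c\,\theta^{-1/2}$ as $\theta\to0^+$ and guarantees $\eta$ is the unique such solution of the ODE.
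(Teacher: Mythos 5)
Your proof is correct and is the natural symmetry-reduction argument; the paper itself offers no proof of this proposition, simply citing \cite{HS}, so your approach is presumably the same as the reference. One small clarification worth making: when you write ``by uniqueness again the solution must be scale-covariant,'' the cleanest route is to apply uniqueness of the \emph{three}-dimensional problem to the dilation $X\mapsto\lambda X$ of $\R^3$ (which preserves $V$ and sends solutions of $\Delta u=\tfrac34 u^5$ to solutions via $u\mapsto\lambda^{1/2}u(\lambda\cdot)$), giving $u_V(\lambda X)=\lambda^{-1/2}u_V(X)$, and then restrict to $z=0$ using the $z$-independence you already established. Invoking uniqueness for the reduced two-dimensional equation $\Delta u=\tfrac34 u^5$ in $V_\alpha$ instead would require a separate justification, since that is not the standard two-dimensional Loewner--Nirenberg problem (the nonlinearity carries the $n=3$ exponent), and it is not covered by the uniqueness statement in the introduction.
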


The proof of this proposition can be found in \cite{HS}. Recall that in Section \ref{Intro} we also present another form of the solution of the Loewner–Nirenberg problem \eqref{equation-LN-1}–\eqref{boundary-condition-LN} in cones. By the uniqueness, this represents the same solution expressed in a different form.
%\begin{example}\label{optimal-estimate}
%Theorem \ref{thm-main-1} is optimal.
%\end{example}

\begin{example}\label{exampleA}
    Let $S_1$ and $S_2$ be two surfaces defined in $\R^3$ as follows: $$S_1=\left\{(x,y,z)\in\R^3: y=\frac{1+z}{100}x\right\}\mbox{ and } S_2=\left\{(x,y,z)\in\R^3: y=0\right\}.$$
    Let $\Omega$ be a domain bounded by $S_1$ and $S_2$, which is defined by $$\Omega=\left\{(x, y, z)\in B_1(0): 0\leq y\leq \frac{1+z}{100}x\right\}.$$Suppose $V_z$ is the tangent cone of $\Omega$ at $(0, 0, z)$, then $V_z$ is translationally invariant along the $z$-axis. For $i=1, 2$, let $P_i^z$ be the tangent plane of $S_i$ at $(0, 0, z).$ As in \cite{HS}, for $q\in V_z$, we define $f_{V_z}(d_1, d_2)$ by letting $f_{V_z}(d_{P_1^z}(q), d_{P_2^z}(q))=u_{V_z}(q)$ be the solution of the Loewner-Nirenberg problem \eqref{equation-LN-1}-\eqref{boundary-condition-LN} in $V_z$, where $d_{P_i^z}(q)$ is the distance from $q$ to $P^z_i$. %By Proposition \ref{examprop}, $$f_{V_z}(d_{P_1^z}(q), d_{P_2^z}(q))=u_{V_z}(q)=(\dist(q, 0))^{-\frac{1}{2}}g_z(\theta),$$
    %where $\theta=q/|q|$ and $g_z\in C^\infty((0, \alpha_z\pi))$ satisfies 
%\begin{equation}
%\left\{
%   \begin{aligned}
%       &g_z''(\theta)+\frac{1}{4}g_z(\theta)=\frac{3}{4}g_z^5(\theta), \quad \mbox{ in } (0, \alpha_z\pi), \\
%       &g_z(0)=+\infty,\quad  g_z(\alpha_z\pi)=+\infty,
%   \end{aligned}
%\right.
%\end{equation}
%Let $\alpha_z\pi=\arctan(\frac{1+z}{100})$ be the opening angle of $V_z$.
    
    Suppose $u>0$ solves the Loewner-Nirenberg problem \eqref{equation-LN-1}-\eqref{boundary-condition-LN} in $\Omega$. Let $z>0$ and $p=(x, y, z)\in\Omega$ be a point such that the distances from $p$ to $S_1$ ans $S_2$ are $ks$ and $s$ respectively, where $s>0$ and $k\in(0, 1)$ are positive constants. When $p$ is close to the origin, by \cite{HS} we know 
    $$u(p)=f_{V_0}(ks, s)(1+O(r)),$$
    where $r=\sqrt{x^2+y^2+z^2}$ is the distance from $p$ to $0$. Similarly, when $p$  is close to $(0, 0, z)$,
    $$u(p)=f_{V_{z}}(ks, s)(1+O(\tilde{r})),$$ where $\tilde{r}=\sqrt{x^2+y^2}$ is the distance from $p$ to $(0, 0, z)$. Therefore, 
    \begin{equation} \label{exam1}
        \frac{u(p)}{f_{V_{0}}(ks, s)}=\frac{f_{V_{z}}(ks, s)}{f_{V_{0}}(ks, s)}(1+O(\tilde{r})).
    \end{equation}
By Proposition \ref{examprop}, we know $\frac{f_{V_{z}}(ks, s)}{f_{V_{0}}(ks, s)}$ is independent of $s$. We claim there are some $z>0$ and $k\in(0, 1)$ such that \begin{equation*}\label{claimexam}
\frac{f_{V_{z}}(ks, s)}{f_{V_{0}}(ks, s)}\neq 1.\end{equation*} 
Fixing such $z>0$ and $k\in(0, 1)$, and letting $s\to 0$ in both sides of \eqref{exam1} (note $\tilde{r}\to 0$ as $s\to 0$), we have
 \begin{equation} \label{exam3}
        \lim_{s\to 0}\frac{u(p)}{f_{V_{0}}(ks, s)}\neq 1.
    \end{equation}
We point out that $\rho(\theta_p)\to0$ as $s\to0$. If \eqref{nontan} holds, then $\lim\limits_{s\to 0}\dfrac{u(p)}{f_{V_{0}}(ks, s)}= 1,$
which contradicts \eqref{exam3}. The contradiction implies that \eqref{nontan} does not hold in this case.

Now we prove the claim by contradiction. We assume for any $z>0$ and $k\in(0, 1)$, ${f_{V_{z}}(ks, s)}={f_{V_{0}}(ks, s)}$. Then there exists a linear transformation $A_z$ such that $A_zV_0=V_z$, and $u_{V_0}(q)=u_{V_z}(A_zq)$ for $q\in V_0$. This is impossible since $A_z$ is not orthogonal.

\end{example}

%\begin{example}\label{estimate-coeff-Linfty}
%In Theorem \ref{thm-main-2}, $c_1\in L^{\infty}$ is optimal.
%\end{example}
Next, we present a more general method for constructing examples that exhibit no tangential decay. %Specifically, the following example indicates that the coefficient function $c_1$ in the estimate \eqref{main-2} in Theorem \ref{thm-main-1} may not satisfy the following property: \begin{equation}\label{nontan}
%c_1(\theta)=o(1),
%\end{equation} for $\theta=Tx/|Tx|$, where $o(1)\to0$ as $\rho(\theta)\to0$ and $\rho\sim d_\Sigma$ in $\Sigma$. 
Surprisingly, even when the boundary is smooth and its tangent cone is a half-space, tangential decay is not guaranteed.
\begin{example}\label{exampleB}
For $x=(x_1, x_2, \cdots, x_n)\in\R^n$, let $r=|y|$ with $$y=Tx:=x+\left(x_1\sum_{k=1}^nx_k-\frac{1}{2}\sum_{k=1}^nx_k^2, x_2\sum_{k=1}^nx_k-\frac{1}{2}\sum_{k=1}^nx_k^2 ,\cdots,x_n\sum_{k=1}^nx_k-\frac{1}{2}\sum_{k=1}^nx_k^2\right).$$
Note $T\in C^\infty$ is invertible near $x=0$ and $T0=0$, $\nabla T(0)=\id$. Therefore, there is a constant $r_\star>0$ such that $T^{-1}$ is invertible in $B_{r_\star}(0)$. We define $S=T^{-1}$ and
$$\Omega=S(\R^n_+\cap B_{r_\star}(0)),$$ then $\R^n_+$ is the tangent cone of $\Omega$ at $x=0$. Equipping $\Omega$ with a metric $g$ as the setting in Section \ref{Intro}, we suppose $u$ is a local positive solution of the singular Yamabe problem \eqref{equation-Yamabe-main}-\eqref{boundary-condition-main} in $\Omega$. Then, by Theorem \ref{thm-main-2}, for $|x|$ small enough, we have
\begin{equation*}
\left|\frac{u(x)}{u_H(Tx)}-1-c_1\left(\frac{Tx}{|Tx|}\right)|Tx|\right|\leq
\begin{cases}
\begin{aligned}
 &Cd^2_g(x, 0),\text{ if }\mu_1>2,\\  
 &Cd^2_g(x, 0)|\log{d_g(x, 0)}|,\text{ if }\mu_1=2,\\
 &Cd^{\mu_1}_g(x, 0),\text{ if }\mu_1<2,
\end{aligned}
\end{cases} 
\end{equation*}
where $u_H=r^{-\frac{n-2}{2}}\xi$ is the solution of the Loewner-Nirenberg problem in $\R^n_+$, and $c_1(\theta)=\frac{\xi_1(\theta)}{\xi(\theta)}$. Here, $\xi$ is the solution of \eqref{equation-Yamabe-2}-\eqref{boundary-condition-2} in $\Sigma:=\partial B_1\cap\R_+^n$, and $\xi_1$ is a solution of \eqref{xi1-equation} in $\Sigma$ {with $k=1$, $\sigma=0$,}
\begin{align*}
&F(y)=-r^{\frac n 2}(a_{ij,k}y_k (u_{H})_{ij}+b_{i,0}(u_{H})_i)(y),\\
&a_{ij}=(g^{kl}T^i_kT^j_l)(Sy)=\delta_{ij}+a_{ij,k}y_k+O(r^2), \mbox{ as } r\to 0,\\
&b_i=\left(g^{kl}T_{kl}^i+\frac{1}{\sqrt{\mathrm{det}g}}\partial_{x_k}(\sqrt{\mathrm{det}g}g^{kl})T^i_l\right)(Sy)=b_{i,0}+O(r), \mbox{ as } r\to 0,\\
&g^{ij}=\delta_{ij}+O(r^2), \mbox{ as } r\to 0.
\end{align*}
By a direct calculation, we have
$$a_{ij,k}=(T^{i}_{jp}\partial_{y_k}(S^py)+T^{j}_{ip}\partial_{y_k}(S^py))(0)=2\delta_{ij},$$ and 
\begin{align*}
F&=-r^{\frac n 2}\left(2(y_1+\cdots+y_n)\Delta u_H+b_{i,0}(u_{H})_i\right)\\
%&=-r^{\frac n 2}\left(\frac{n(n-2)}{2}(y_1+\cdots+y_n)(u_H)^{\frac{n+2}{n-2}}+b_{i,0}(u_{H})_i\right)\\
&=-r^{\frac n 2}\left(\frac{n(n-2)}{2}(y_1+\cdots+y_n)(r^{-\frac{n+2}{2}}\xi^{\frac{n+2}{n-2}})+b_{i,0}(u_{H})_i\right).
\end{align*}
Let  $Q:=\partial B_1\cap\{y\in \R^n_+: y_1>0, y_2>0, \cdots, y_{n-1}>0\}\subset \Sigma$, then, $r^{\frac n 2}b_{i,0}(u_{H})_i\lesssim \xi^{\frac{n}{n-2}}$, and 
$(y_1+\cdots+y_n)r^{-1}\xi^{\frac{n+2}{n-2}}\sim\xi^{\frac{n+2}{n-2}}
$ in $Q$. Hence, we know $F\sim\xi^{\frac{n+2}{n-2}}$, and further $\rho^2F\sim \xi$ in $Q$. Note $\mu_1>1$ (by Remark \ref{mu1g1}), if \eqref{nontan} holds, then $\xi_1=o(1)\xi$ in $Q$, where $o(1)\to 0$ as $\rho(\theta)\to 0$. It is impossible when we compare the order of two sides of the equation \eqref{xi1-equation} in $Q$. Therefore, \eqref{nontan} dose not hold in this case.
\end{example}

\appendix
\section{Existence of Solutions to Degenerate/Singular Elliptic Equations}\label{app}
%First, we study the existence of solutions to uniformly degenerate elliptic equations on spherical domains with coefficients and non-homogeneous terms singular on the boundaries.
Let $\Sigma\subsetneq\mathbb{S}^{n-1}$ be a Lipschitz domain, $\xi\in C^{\infty}(\Sigma)$ be a positive
function in $\Sigma$ satisfying \eqref{equation-Yamabe-2}-\eqref{boundary-condition-2},
\begin{equation}%\label{rho-xi}
\rho=\xi^{-\frac{2}{n-2}},
\end{equation}
and $d_\Sigma=d_{g_{\mathbb{S}^{n-1}}}(\cdot,\partial\Sigma).$
By Lemma 2.4 in \cite{HJS}, we have $\rho\in C^{\infty}(\Sigma)\cap \mathrm{Lip}(\bar\Sigma)$, $\rho>0$ in $\Sigma$, $\rho=0$ on $\partial\Sigma$, and $\rho\sim d_{\Sigma}$.
%Let
%\begin{equation}\label{rho-xi}
%\rho=\xi^{-\frac{2}{n-2}}.
%\end{equation}
%By Lemma 2.4 in \cite{HJS}, we have $\rho\in C^{\infty}(\Sigma)\cap \mathrm{Lip}(\bar\Sigma)$, $\rho>0$ in $\Sigma$, $\rho=0$ on $\partial\Sigma$, and $\rho\sim d_{\Sigma}$. 
Denote by $\{\theta_i\}^{n-1}_{i=1}$ normal local coordinates on the sphere $\mathbb{S}^{n-1}$.  Set $\partial_i=\partial_{\theta_i}$ and $\partial_{ij}=\partial_i\partial_j=\partial_j\partial_i$ for $i,j=1,\cdots,n-1$. We define the uniformly degenerate elliptic operator $L_0$ in $\Sigma$ by
\begin{equation}\label{L0} 
L_0=\rho^2a_{ij}\partial_{ij}+\rho b_i\partial_i +c,
\end{equation}
where $a_{ij},b_i,c\in L^{\infty}(\Sigma)$ satisfying $a_{ij}=a_{ji}$, and for any $\theta\in\Sigma$ and $\eta\in\mathbb R^{n-1}$,
\begin{equation}\label{elliptic}
\lambda|\eta|^2\leq a_{ij}(\theta)\eta_i\eta_j\leq \Lambda|\eta|^2,
\end{equation}
for some positive constants $\lambda\leq \Lambda$.

%Now, we prove an existence result for uniformly degenerate elliptic equations due to Graham and Lee. For the sake of completeness, we provide a proof here.

We now present an existence result for uniformly degenerate elliptic equations in Lipschitz domains, due to Graham and Lee \cite{GL}. A proof is provided here for the sake of completeness.

\begin{theorem}\label{outer-problem-theorem}
Let $\Sigma\subsetneq\mathbb{S}^{n-1}$ be a Lipschitz domain, $\xi\in C^{\infty}(\Sigma)$ be a positive
function in $\Sigma$ satisfying \eqref{equation-Yamabe-2}-\eqref{boundary-condition-2}, and $a_{ij},b_i,c\in L^{\infty}(\Sigma)\cap C^\alpha(\Sigma)$ with \eqref{elliptic}, for some $\alpha\in(0,1)$. Assume there exists a function $\psi\in C^2(\Sigma)$ such that $\psi>0$ in $\Sigma$, $\rho\nabla_\theta\log\psi\in L^\infty(\Sigma)$, and, for some constant $\delta>0$,
\begin{equation}\label{psi-super}
 L_0\psi\leq -\delta\psi\quad\text{in }\Sigma. 
\end{equation}
Then, for any $f\in C^\alpha(\Sigma)$ with $f/\psi\in L^\infty(\Sigma)$, there exists a solution $u\in C^{2,\alpha}(\Sigma)$ of 
\begin{equation}\label{L0-equation}
L_0u=f\quad\text{in }\Sigma,  
\end{equation}
and
\begin{equation}\label{u-psi-estimate}
\Big\|\frac u\psi\Big\|_{L^\infty(\Sigma)}\leq \frac1\delta\Big\|\frac f\psi\Big\|_{L^\infty(\Sigma)}.
\end{equation}
\end{theorem}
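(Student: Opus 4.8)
The plan is to establish Theorem \ref{outer-problem-theorem} by an exhaustion argument: solve the Dirichlet problem on a sequence of smooth subdomains $\Sigma_k \Subset \Sigma$ with $\bigcup_k \Sigma_k = \Sigma$, obtain uniform estimates via the supersolution $\psi$, and pass to the limit. First I would fix a smooth exhaustion $\Sigma_1 \Subset \Sigma_2 \Subset \cdots$ with $\overline{\Sigma_k} \subset \Sigma$, so that on each $\overline{\Sigma_k}$ the operator $L_0$ is uniformly (non-degenerate) elliptic with $C^\alpha$ coefficients — indeed $\rho^2 a_{ij}$ is bounded below by a positive constant on $\overline{\Sigma_k}$ since $\rho > 0$ there. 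On $\Sigma_k$ I would solve the Dirichlet problem $L_0 u_k = f$ in $\Sigma_k$, $u_k = 0$ on $\partial\Sigma_k$; existence and $C^{2,\alpha}(\overline{\Sigma_k})$ regularity follow from standard Schauder theory, provided the zeroth-order coefficient issue is handled. Since $c$ need not be nonpositive, I cannot invoke the maximum principle for $L_0$ directly; instead the hypothesis \eqref{psi-super} is used to conjugate: writing $u_k = \psi v_k$, the function $v_k$ solves an operator $\widetilde L_0 v_k = f/\psi$ whose zeroth-order coefficient is $(L_0\psi)/\psi \le -\delta < 0$, so $\widetilde L_0$ satisfies the maximum principle and the Fredholm alternative gives unique solvability on each $\Sigma_k$.

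The key quantitative step is the uniform bound. Set $M = \|f/\psi\|_{L^\infty(\Sigma)}$. I claim $|u_k| \le (M/\delta)\psi$ on $\Sigma_k$. To see this, consider $h^{\pm}_k = u_k \mp (M/\delta)\psi$; then $L_0 h^{\pm}_k = f \mp (M/\delta) L_0\psi \ge f \pm M\psi \ge 0$ (resp. $\le 0$) by \eqref{psi-super} and the definition of $M$. Dividing by $\psi$ and applying the maximum principle to $\widetilde L_0(h^{\pm}_k/\psi)$, which has negative zeroth-order term $(L_0\psi)/\psi$, together with the boundary condition $h^{\pm}_k/\psi = \mp M/\delta$ on $\partial\Sigma_k$ (of the correct sign), forces $h^{+}_k \le 0$ and $h^{-}_k \ge 0$ on $\Sigma_k$, i.e. the claimed bound. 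This bound is uniform in $k$ and is exactly \eqref{u-psi-estimate} in the limit.

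Finally I would extract a convergent subsequence. On any fixed $\Sigma_m$, for all $k > m$ the functions $u_k$ are solutions of $L_0 u_k = f$ on a neighborhood of $\overline{\Sigma_m}$, uniformly bounded there (since $\psi$ is bounded on $\overline{\Sigma_m}$), so interior Schauder estimates give uniform $C^{2,\alpha}(\overline{\Sigma_m})$ bounds; a diagonal argument over $m$ produces a subsequence converging in $C^2_{\mathrm{loc}}(\Sigma)$ to some $u \in C^{2,\alpha}(\Sigma)$ solving $L_0 u = f$ in $\Sigma$, and the pointwise bound $|u| \le (M/\delta)\psi$ survives the limit, giving \eqref{u-psi-estimate}.

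The main obstacle is handling the non-sign-definite zeroth-order coefficient $c$: without the conjugation by $\psi$ one has neither a maximum principle nor a priori $L^\infty$ control, and one must also check that $\rho\nabla_\theta\log\psi \in L^\infty(\Sigma)$ is precisely what is needed to keep the drift term of $\widetilde L_0$ (which acquires a contribution $\rho^2 a_{ij}\,\partial_i\log\psi$) bounded after multiplying through by $\rho^{-1}$, so that $\widetilde L_0$ still has the structural form \eqref{L0} with $L^\infty$ coefficients on all of $\Sigma$ and the maximum principle applies uniformly across the exhaustion. A secondary technical point is verifying the Fredholm alternative applies on each smooth $\Sigma_k$ despite $L_0$ having merely $C^\alpha$ (not smooth) coefficients, which is standard but should be stated.
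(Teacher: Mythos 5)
Your proposal is correct and follows essentially the same route as the paper: exhaust $\Sigma$ by smooth precompact subdomains, solve the Dirichlet problem on each via Schauder theory, obtain the uniform bound by conjugating with $\psi$ so that the transformed operator has zeroth-order coefficient $(L_0\psi)/\psi\le-\delta$ and the maximum principle applies, then pass to the limit via interior Schauder estimates and a diagonal argument. The paper applies the maximum-principle estimate directly to $v_k=u_k/\psi$ rather than through your comparison functions $h_k^{\pm}$, but these are equivalent standard steps, and your added remarks on the Fredholm alternative and on the role of $\rho\nabla_\theta\log\psi\in L^\infty$ merely make explicit what the paper leaves implicit.
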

\begin{proof}
Consider an increasing sequence of domains $\{\Omega_k\}^\infty_{k=1}$ with boundaries of class $C^{2,\alpha}$ such that $\Omega_k\subset\subset\Omega_{k+1}\subset\subset\Sigma$ and $\Sigma=\cup^\infty_{k=1}\Omega_k$. Since $\rho>0$ in $\Sigma$,
the operator $L_0$ is uniformly elliptic in $\Omega_k$ with coefficients in $C^\alpha(\bar{\Omega}_k)$. By the Schauder theory, there exists a unique solution $u_k\in C^{2,\alpha}(\bar{\Omega}_k)$ of
\begin{align*}
    L_0u_k&=f\quad\text{in }\Omega_k , \\
    u_k&=0\quad\text{on } \partial\Omega_k .
\end{align*}
Define $v_k=u_k/\psi$. A direct calculation shows that $v_k$ satisfies
\begin{align*}
\rho^2a_{ij}\partial_{ij}v_k+\rho (b_i+2\rho a_{ij}\partial_j\log \psi)\partial_i v_k +\frac{L_0\psi}{\psi}v_k&=\frac{f}{\psi}\quad\text{in }\Omega_k,\\
v_k&=0\quad\text{on }\partial\Omega_k.
\end{align*}
By the maximum principle, we obtain, for any $k\geq 1$,
\begin{equation}\label{uk-psi-estimate}
\Big\|\frac {u_k}\psi\Big\|_{L^\infty(\Omega_k)}\leq \frac1\delta\Big\|\frac f\psi\Big\|_{L^\infty(\Omega_k)}\leq\frac1\delta\Big\|\frac f\psi\Big\|_{L^\infty(\Sigma)}.
\end{equation}   
Now fix an $i\geq1$ and take $k\geq i+1$. Then
$$
\|u_k\|_{L^{\infty}(\Omega_{i+1})}\leq \frac{1}{\delta}\|\psi\|_{L^{\infty}(\Omega_{i+1})}\Big\|\frac{f}{\psi}\Big\|_{L^\infty(\Sigma)}.
$$
By applying the interior Schauder estimates to $L_0u_k=f$ in $\Omega_i\subset\subset\Omega_{i+1}$, we get, for any $k\geq i+1$,
\begin{align*}
\|u_k\|_{C^{2,\alpha}(\bar{\Omega}_i)}&\leq C\{\|u_k\|_{L^\infty(\Omega_{i+1})}+\|f\|_{C^\alpha(\bar{\Omega}_{i+1})}\}\\
&\leq C\Big\{\frac{1}{\delta}\|\psi\|_{L^{\infty}(\Omega_{i+1})}\Big\|\frac{f}{\psi}\Big\|_{L^\infty(\Sigma)}+\|f\|_{C^\alpha(\bar{\Omega}_{i+1})}\Big\},
\end{align*}
where $C$ is a positive constant depending on $i$, but independent of $k$. Using a standard diagonal argument, we can extract a subsequence $\{u_{k'}\}$ of $\{u_k\}$ and find a function $u\in C^{2,\alpha}(\Sigma)$ such that $u_{k'}\rightarrow u$ in $C^{2}(\bar\Omega_i)$ for any $i\geq 1$. Thus, $L_0u=f$ in $\Sigma$. Passing to the limit $k\rightarrow\infty$ in \eqref{uk-psi-estimate} yields \eqref{u-psi-estimate}. 
\end{proof}
%\subsection{$L^2$-theory of singular elliptic operators}
%Let $\Sigma\subset\mathbb{S}^{n-1}$ be a Lipschitz domain.
Next, we investigate the $L^2$-theory of a specific singular elliptic operator. We define 
\begin{equation}\label{L1}
L_1=\Delta_\theta-\frac{\kappa}{\rho^2}\quad\text{in }\Sigma,
\end{equation}
where $\kappa=n(n+2)/4$.

Here we collect some useful results from \cite{HJS,SW}. 
\begin{lemma}\label{inner-problem-lemma}
Let $\Sigma\subsetneq\mathbb{S}^{n-1}$ be a Lipschitz domain and let $\xi\in C^{\infty}(\Sigma)$ be a positive
function in $\Sigma$ satisfying \eqref{equation-Yamabe-2}-\eqref{boundary-condition-2}.

$\mathrm{(1)}$$($\cite[Theorem 4.2]{HJS}$)$ There exists an increasing sequence of positive constants $\{\lambda_i\}_{i\geq 1}$, diverging to $\infty$, and an $L^2(\Sigma)$-orthonormal basis $\{\phi_i\}_{i\geq 1}$ such that, for $i\geq 1$, $\phi_i\in C^{\infty}(\Sigma)\cap H^1_0(\Sigma)$ and $L_1\phi_i=-\lambda_i \phi_i$ holds in the weak sense.

$\mathrm{(2)}$$($Fredholm alternative, \cite[Theorem 4.3]{HJS}$)$ For any $\lambda\notin\{\lambda_i\}$ and any $f\in L^2(\Sigma)$, there exists a unique weak solution $u\in H^1_0(\Sigma)$ of 
\begin{equation}\label{L1-eigenvalue}
L_1 u+\lambda u=f\quad\text{in }\Sigma,
\end{equation}
satisfying the estimate
\begin{equation}\label{eigen-estimate}
\|u\|_{H^1_0(\Sigma)}\leq C\|f\|_{L^{2}(\Sigma)},
\end{equation}
where $C>0$ depends only on $n$, $\lambda$, and $\Sigma$.

For any $\lambda=\lambda_i$ for some $i$ and any $f\in L^2(\Sigma)$ with $(f,\phi_k)_{L^2(\Sigma)}=0$ for any
eigenfunction $\phi_k$ corresponding to the eigenvalue $\lambda_i$, there exists a unique weak solution $u\in H^1_0(\Sigma)$ of \eqref{L1-eigenvalue} such that $(u,\phi_k)_{L^2(\Sigma)}=0$ for any eigenfunction $\phi_k$ corresponding to the eigenvalue $\lambda_i$, and \eqref{eigen-estimate} holds.

$\mathrm{(3)}$$($\cite[Proposition 4.5]{SW}$)$ If $n=3$, then $\lambda_1>\frac{3}{4}$.

$\mathrm{(4)}$$($\cite[Theorem 4.4]{HJS}; \cite[ Lemma 4.1]{SW}$)$ Assume that for some $\lambda\in\mathbb{R}$ and $f\in C^{\infty}(\Sigma)$, $u\in H^1_0(\Sigma)$ is a weak solution of \eqref{L1-eigenvalue}. Then, there exists a constant $\nu>0$ depending only on $n$ and $\Sigma$ such that if, for some constant $A> 0$,
$$
|f|\leq A\rho^{\nu-2}\quad\text{in }\Sigma,
$$
then
\begin{equation}\label{inner-problem-infty-norm}
|u|\leq C(\|u\|_{L^2(\Sigma)}+A)\rho^\nu\quad\text{in }\Sigma,
\end{equation}
where $C>0$ depends only on $n$, $\lambda$, and $\Sigma$.
In particular, let $\phi_i$ be an eigenfunction as in $\mathrm{(1)}$. {Then, for any $\beta\in(0,1)$,
\begin{equation}\label{eigenvalue-function-estimate}
|\phi_i|+%\rho^\beta[\phi_i]_{C^{\beta}(B_{d_\Sigma(\cdot)/2}(\cdot))}+
\rho|\nabla_{\theta}\phi_i|+\rho^2|\nabla^2_{\theta}\phi_i|+\rho^{2+\beta}[\nabla^2_\theta \phi_i]_{C^{\beta}(B_{d_\Sigma(\cdot)/2}(\cdot))}\leq  C\rho^\nu\quad\text{in }\Sigma,
\end{equation}
where $C>0$ depends only on $n$, $\beta$, 
$\lambda_i$, and $\Sigma$.}
\end{lemma}

\bibliography{bib}
\bibliographystyle{plain}
\end{document}